\newtheorem{thm}{Theorem}[section]
\newtheorem{cor}[thm]{Corollary}
\newtheorem{lem}[thm]{Lemma}
\newtheorem{ex}[thm]{Example}
\theoremstyle{definition}
\newtheorem{defi}[thm]{Definition}
\newtheorem*{notation}{Notation}
\newcommand{\ov}{\overline}
\newcommand{\und}{\underline}
\newcommand{\M}{\mathcal{M}}
\newcommand{\R}{\mathbb{R}}
\newcommand{\N}{\mathbb{N}}
\newcommand{\cL}{\mathcal{L}}
\newcommand{\cC}{\mathcal{C}}
\newcommand{\cO}{\mathcal{O}}
\newcommand{\cG}{\mathcal{G}}
\newcommand{\cB}{\mathcal{B}}
\newcommand\setof[2]{\left\{#1\;:\;#2\right\}}
\DeclarePairedDelimiter\set{\{}{\}}
\DeclarePairedDelimiter\abs{\lvert}{\rvert}
\DeclareMathOperator{\im}{im} 
\DeclareMathOperator{\ind}{ind}
\title{Graphs with the Fewest Matchings}
\author{L. Keough}
\author{A.J. Radcliffe}
\begin{document}

\begin{abstract}
In recent years there has been increased interest in extremal problems for ``counting'' parameters of graphs.  For example, the Kahn-Zhao theorem gives an upper bound on the number of independent sets in a $d$-regular graph.  In the same spirit, the Upper Matching Conjecture claims an upper bound on the number of $k$-matchings in a $d$-regular graph. Here we consider both matchings and matchings of fixed sizes in graphs with a given number vertices and edges. We prove that the graph with the fewest matchings is either the lex or the colex graph.  Similarly, for fixed $k$, the graph with the fewest $k$-matchings is either the lex or the colex graph. To prove these results we first prove that the lex bipartite graph has the fewest matchings of all sizes among bipartite graphs with fixed part sizes and a given number of edges.
\end{abstract}

\maketitle

\section{Introduction and statement of results}

In recent years there has been increased interest in extremal problems for ``counting'' parameters of graphs. A classic example is the Kahn-Zhao theorem, proved initially by Kahn \cite{Kahn} in the bipartite case, and then extended to the general case by Zhao \cite{Zhao}.

\begin{thm}[Kahn-Zhao]\label{thm:KahnZhao}
    If $G$ is a $d$-regular graph then $\ind(G)$, the number of independent sets in $G$, satisfies
    \[
        \ind(G) \le \left(2^{d+1}-1\right)^{\frac{n}{2d}} = (\ind(K_{d,d}))^{\frac{n}{2d}}.
    \]
\end{thm}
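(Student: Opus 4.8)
The plan is to proceed in two stages, following Kahn's entropy argument in the bipartite case and Zhao's bipartite-double-cover reduction of the general case to it. Throughout, $H(\cdot)$ denotes binary entropy, so that a uniformly random element $I$ of a finite set $\mathcal S$ satisfies $\log_2\abs{\mathcal S}=H(I)$. First suppose $G$ is $d$-regular and bipartite with parts $A,B$, so $\abs A=\abs B=n/2$, and let $I$ be a uniformly random independent set, with $I_A=I\cap A$, $I_B=I\cap B$, and $I_S=I\cap S$ for $S\subseteq V(G)$. I would begin from $\log_2\ind(G)=H(I)=H(I_A)+H(I_B\mid I_A)$. To the first term I would apply Shearer's lemma to the family $\setof{N(b)}{b\in B}$ of subsets of $A$: by $d$-regularity each vertex of $A$ lies in exactly $d$ of these sets, giving
\[
  H(I_A)\le\frac1d\sum_{b\in B}H(I_{N(b)}).
\]
To the second term I would apply the chain rule over $b\in B$, drop conditioning, and weaken the conditioning from $I_A$ to $I_{N(b)}$ (which is a function of $I_A$), obtaining
\[
  H(I_B\mid I_A)\le\sum_{b\in B}H\bigl(\mathbf 1[b\in I]\mid I_{N(b)}\bigr).
\]
Adding the two bounds, everything reduces to the local estimate, for each fixed $b$, of $\tfrac1d H(I_{N(b)})+H(\mathbf 1[b\in I]\mid I_{N(b)})$.

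With $q=\Pr[I_{N(b)}=\emptyset]$ and $\abs{N(b)}=d$, I would bound the first summand by $\tfrac1d\bigl(H(q)+(1-q)\log_2(2^d-1)\bigr)$, using that a $d$-set has $2^d-1$ nonempty subsets, and the second by $q$, since the indicator is forced to $0$ whenever a neighbour of $b$ lies in $I$. The remaining point is the one-variable optimization
\[
  \max_{q\in[0,1]}\left(\frac{H(q)+(1-q)\log_2(2^d-1)}{d}+q\right)=\frac1d\log_2\!\left(2^{d+1}-1\right),
\]
whose maximizer turns out to be $q=2^d/(2^{d+1}-1)$. Summing over the $n/2$ choices of $b$ then gives $\log_2\ind(G)\le\tfrac{n}{2d}\log_2(2^{d+1}-1)$, which is the claimed inequality since $\ind(K_{d,d})=2^{d+1}-1$.

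\emph{General case.} For an arbitrary $d$-regular $G$ on $n$ vertices I would pass to the bipartite double cover $G\times K_2$, which is $d$-regular and bipartite on $2n$ vertices, and invoke Zhao's inequality $\ind(G)^2\le\ind(G\times K_2)$; combined with the bipartite case this yields $\ind(G)^2\le(2^{d+1}-1)^{2n/(2d)}$, and taking square roots finishes the proof. To establish Zhao's inequality I would identify independent sets of $G\times K_2$ with ordered pairs $(X,Y)$ of subsets of $V(G)$ with no $G$-edge between them, so that $\ind(G\times K_2)=\sum_{X\subseteq V(G)}2^{\,n-\abs{N(X)}}$, and then show that this sum is at least $\ind(G)^2$ by a short symmetrization argument that pushes pairs of independent sets towards ``diagonal'' configurations $(I,I)$.

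\emph{Main obstacle.} The genuine content of the bipartite step is the local optimization displayed above, together with normalizing the Shearer and subadditivity estimates so that the two pieces recombine with exactly the right constant — a misplaced factor of $d$ destroys the bound. In the general step the only hard point is Zhao's double-cover inequality, which is far from obvious and needs its own (though short) argument. If that step proved awkward, the fallback is the occupancy method: bound the occupancy fraction of the hard-core model on any $d$-regular graph by that of $K_{d,d}$ via a local linear-programming argument, then integrate in the fugacity down to $\lambda=1$; this proves the theorem directly, with no bipartite reduction.
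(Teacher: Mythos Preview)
The paper does not prove this theorem at all: it is stated in the introduction as background, with citations to Kahn and Zhao, and no argument is given. So there is no ``paper's own proof'' to compare against.

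That said, your outline is exactly the standard Kahn--Zhao proof and is essentially correct. The bipartite entropy argument (Shearer on the $A$-side, chain rule plus weakened conditioning on the $B$-side, then the one-variable optimization in $q$) is the content of Kahn's paper, and your bookkeeping of the factors of $d$ is right. The reduction via the bipartite double cover and the inequality $\ind(G)^2\le\ind(G\times K_2)$ is Zhao's contribution. Your identification of independent sets of $G\times K_2$ with pairs $(X,Y)$ having no $G$-edge between them, and the formula $\ind(G\times K_2)=\sum_X 2^{\,n-\abs{N(X)}}$, are both correct. The one place your write-up is thin is the phrase ``a short symmetrization argument that pushes pairs of independent sets towards diagonal configurations'': Zhao's actual proof is an explicit injection from pairs $(I,J)$ of independent sets of $G$ into independent sets of $G\times K_2$, built by fixing a linear order on $V(G)$ and swapping the two copies on a carefully chosen vertex subset determined by the ``bad'' edges between $I$ and $J$. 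That is indeed short, but it is a specific combinatorial construction rather than a generic symmetrization, and you would need to spell it out for the proof to be complete.
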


Such problems have been studied (extensively) for the number of perfect matchings in a graph: see for instance \cite{Alon, KahnCuckler,EntropyKahnLovasz, Gross} and others. Usually these results concern regular graphs or graphs with a given degree sequence. In another vein there has been work on determining which graph with given numbers of vertices and edges maximizes or minimizes a given counting parameter. (We write $\cG_{n,e}$ for the class of graphs having $n$ vertices and $e$ edges.) For instance Cutler and Radcliffe \cite{WidomRowlinson,TheFox} 
determined which graphs have the largest number of homomorphisms in various fixed images graphs $H$. They also showed that the Kruskal-Katona Theorem implies that the lex graph has the greatest number of independent sets among graphs in $\cG_{n,e}$.  A classic paper of Ahlswede and Katona \cite{Ahlswede} determines the minimum number of pairs in non-incident edges a graph in $\cG_{n,e}$ can have. (Which is of course the same as maximizing the number of pairs of incident edges.)

In this paper we solve the problem of determining which graph with
$n$ vertices and $e$ edges has the fewest matchings. (A
\emph{matching} in a graph is simply a set of pairwise non-incident
edges.) We denote the set of matchings in $G$ by $\M(G)$ and the set
of $k$-matchings in $G$ by $\M_k(G)$.  Also we write $m(G)=|\M(G)|$ and
$m_k(G)= |\M_k(G)|$. In general our notation is standard and
follows that of Bollob\'as \cite{MGT}. It turns out that, following
the general approach of Ahlswede and Katona, we need to consider the
class $\cB_{\ell,r,e}$ of all bipartite graphs with $\ell$ vertices
in the left part, $r$ vertices on the right, and having $e$ edges.
We determine the graph in $\cB_{\ell,r,e}$ having the fewest
matchings. Our techniques also allow us to determine the graphs
minimizing the number of matchings of size $k$, for all values of
$k$, in $\cB_{\ell,r,e}$ and in $\cG_{n,e}$. In order to state our
results we need to describe three collections of graphs: lex graphs,
colex graphs, and lex bipartite graphs.

\begin{defi}  The \emph{lexicographic order}, $<_L$, on finite subsets of $\N$ is defined by $A<_L B$ if  $\min(A\Delta B)\in A$. The \emph{colexigraphic order}, $<_C$, is defined by $A<_C B$ if $\max(A\Delta B) \in B$.
\end{defi}

Restricting these orderings to $2$-subsets of $[n]$ results in the lex and colex orderings on $E(K_n)$.    The first few edges in the lex ordering on $E(K_n)$ are
$$\set{1,2},\set{1,3},\dots,\set{1,n},\set{2,3},\set{2,4},\dots,\set{2,n},\set{3,4},\dots$$
and the first few edges in the colex ordering on $E(K_n)$ are
$$\set{1,2},\set{1,3},\set{2,3},\set{1,4},\set{2,4},\set{3,4},\set{1,5},\set{2,5},\dots$$

\begin{defi}
The \emph{lex graph} $\cL(n,e)$ is the graph with vertex set $[n]$ and edge set consisting of the first $e$ edges in the lex order on $E(K_n)$.  Similarly,  the \emph{colex graph} $\cC(n,e)$ is the graph with vertex set $[n]$ and edge set consisting of the first $e$ edges in the colex order on $E(K_n)$.
\end{defi}

\begin{ex}  The graph below is $\cL(7,8)$.
\begin{center}
\begin{tikzpicture}[scale=.7]
    \coordinate (0) at (0,0) ;
    \coordinate (1) at (1,0) ;
    \coordinate (2) at (2,0) ;
    \coordinate (3) at (3,0) ;
    \coordinate (4) at (4,0) ;
    \coordinate (5) at (5,0) ;
    \coordinate (6) at (6,0) ;

    \fill (0) circle (3pt) ;
    \fill (1) circle (3pt) ;
    \fill (2) circle (3pt) ;
    \fill (3) circle (3pt) ;
    \fill (4) circle (3pt) ;
    \fill (5) circle (3pt) ;
    \fill (6) circle (3pt) ;

   \tikzstyle{EdgeStyle}=[bend left]

   \Edge(0)(1)

   \Edge(0)(2)

   \Edge(0)(3)

   \Edge(0)(4)
   \Edge(0)(5)
   \Edge(0)(6)

   \Edge(1)(2)

   \Edge(1)(3)
\end{tikzpicture}
\end{center}
\end{ex}

\begin{ex}The graph below is $\cC(7,8)$.

\begin{center}
\begin{tikzpicture}[scale=.7]
    \coordinate (0) at (0,0) ;
    \coordinate (1) at (1,0) ;
    \coordinate (2) at (2,0) ;
    \coordinate (3) at (3,0) ;
    \coordinate (4) at (4,0) ;
    \coordinate (5) at (5,0) ;
    \coordinate (6) at (6,0) ;

    \fill (0) circle (3pt) ;
    \fill (1) circle (3pt) ;
    \fill (2) circle (3pt) ;
    \fill (3) circle (3pt) ;
    \fill (4) circle (3pt) ;
    \fill (5) circle (3pt) ;
    \fill (6) circle (3pt) ;

   \tikzstyle{EdgeStyle}=[bend left]

   \Edge(0)(1)

   \Edge(0)(2)

   \Edge(1)(2)

   \Edge(0)(3)
   \Edge(1)(3)
   \Edge(2)(3)

   \Edge(0)(4)

   \Edge(1)(4)
\end{tikzpicture}
\end{center}
\end{ex}

Our main result is that $m(G)$ and $m_k(G)$ are each minimized by either the lex graph or the colex graph.

\begin{thm}\label{general}
For all graphs $G$ with $n$ vertices and $e$ edges, and for all $k$,
$$m(G) \geq \min\{ m(\cL(n,e)), m(\cC(n,e)\}$$
and
$$m_k(G) \geq \min\{ m_k(\cL(n,e)), m_k(\cC(n,e)\}.$$
\end{thm}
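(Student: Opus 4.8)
The plan is to peel the problem down, in three stages, to the bipartite minimization result promised in the abstract (the lex bipartite graph minimizes $m_b$ for every $b$ within $\cB_{\ell,r,e}$), which I take as given.

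\emph{Stage 1: reduce to threshold graphs by compression.} For distinct vertices $u,v$ of a graph $G$, let $C_{u\to v}(G)$ be obtained by replacing each ``$u$-private'' edge $ux$ (meaning $x\sim u$, $x\not\sim v$, $x\ne v$) by the edge $vx$; this preserves the number of vertices and edges and leaves $G-u-v$ untouched. I would first show $m_k(C_{u\to v}(G))\le m_k(G)$ for all $k$. Group the matchings of either graph by their restriction $N$ to $G-u-v$: if $N$ leaves uncovered $a$ common neighbours of $u,v$, $p$ $u$-private vertices and $p'$ $v$-private vertices, then the number of ways to extend $N$ by $t$ edges meeting $\{u,v\}$ is $1$ for $t=0$, is $2a+p+p'+[uv\in E]$ for $t=1$, is $(a+p)(a+p')-a$ for $t=2$, and is $0$ for $t\ge 3$. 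After the compression the $t=0$ and $t=1$ counts are unchanged, while the $t=2$ count becomes $a(a+p+p')-a$, smaller by exactly $pp'\ge 0$; summing over $N$ and over $t$ with $t=k-|N|$ gives the inequality, for every $k$ simultaneously. Iterating a suitable sequence of such compressions turns $G$ into a threshold graph $T\in\cG_{n,e}$ with $m_k(T)\le m_k(G)$ for all $k$, so it suffices to treat threshold graphs.

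\emph{Stage 2: decompose a threshold graph and optimize its bipartite part.} A threshold graph $T$ has a split $V(T)=C\sqcup S$ into a clique $C$ (set $q=|C|$) and an independent set $S$, with $B=T[C,S]$ a chain bipartite graph. Since each edge of $B$ meets $C$ in exactly one vertex, a $k$-matching of $T$ is exactly a $b$-matching of $B$ together with a $(k-b)$-matching of the complete graph on the $q-b$ vertices of $C$ missed by the $B$-part, so
\[
m_k(T)=\sum_{b=0}^{k} m_b(B)\, m_{k-b}(K_{q-b}).
\]
The coefficients $m_{k-b}(K_{q-b})$ are positive and depend only on $q,k$; hence, over all bipartite graphs on the same parts with $e-\binom q2$ edges, $m_k(T)$ (for every $k$ at once) is minimized when $B$ is the lex bipartite graph $B_q^{*}$. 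Writing $\tilde T_q=K_C+B_q^{*}+\ov{K}_S$, we get $m_k(T)\ge m_k(\tilde T_q)$ for all $k$, so $\tilde T_q$ also beats $T$ for $m(\cdot)=\sum_k m_k(\cdot)$; since each $\tilde T_q\in\cG_{n,e}$, this yields $\min_{G\in\cG_{n,e}}m_k(G)=\min_{q}m_k(\tilde T_q)$ (and likewise for $m$), the minimum over $q$ with $0\le\binom q2\le e$ and $e-\binom q2\le q(n-q)$.

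\emph{Stage 3: the one-parameter family.} It remains to show the minimum of $q\mapsto m_k(\tilde T_q)$ (and of $q\mapsto m(\tilde T_q)$) is attained at an endpoint of the admissible range, and to identify the endpoints. For the smallest admissible $q$ one checks directly that $\tilde T_q=\cL(n,e)$; for the largest admissible $q$ (where $e':=e-\binom q2<q$) the graph $\tilde T_q$ is $K_q$ with $e'$ pendant vertices at one clique vertex, and $m_k(K_q+e'\ \text{pendants at a vertex})=m_k(K_q)+e'\,m_{k-1}(K_{q-1})=m_k(\cC(n,e))$, using $m_k(K_q)=m_k(K_{q-1})+(q-1)m_{k-1}(K_{q-2})$ --- so $\tilde T_{q_{\max}}$ and $\cC(n,e)$ have the same matching polynomial. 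The theorem then follows from convexity of $q\mapsto m_k(\tilde T_q)$, which I would prove by comparing three consecutive members $\tilde T_{q-1},\tilde T_q,\tilde T_{q+1}$ via the displayed formula; this discrete-convexity estimate --- a common generalization of the $k=2$ computation that is Ahlswede--Katona --- is where the real work sits and is the step I expect to be the main obstacle (a secondary, more routine annoyance is pinning down termination of the compression process and the precise identification $\tilde T_{q_{\min}}=\cL(n,e)$ at all parameter values).
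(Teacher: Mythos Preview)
Your first two stages match the paper's argument. Stage~1 is the compression reduction to threshold graphs (the paper establishes $m_k(G_{u\to v})\le m_k(G)$ by an explicit injection between matchings rather than your extension count, but either works). Stage~2 is exactly the paper's decomposition lemma $m_k(T)=\sum_b m_b(B)\,m_{k-b}(K_{q-b})$ combined with the bipartite theorem, reducing the problem to the lex-across graphs $\tilde T_q$.

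The gap is in Stage~3. You propose proving discrete convexity of $q\mapsto m_k(\tilde T_q)$, which you do not prove and correctly flag as the main obstacle; it is not clear that it holds, and the paper neither proves nor needs it. The paper's Stage~3 is a short structural trick that never compares $m_k$ across different values of $q$. Over all triples $(G,K,I)$ with $G$ an optimal lex-across graph and $(K,I)$ a clique/independent split of $V(G)$, let $\bar s=\max|K|$. If $\bar s\ge n/2$, take a triple realizing $\bar s$; then no vertex $v\in I$ can satisfy $N(v)=K$, since moving $v$ into the clique (and, if needed, re-applying the bipartite theorem to the new bipartite part) would produce an optimal lex-across triple with strictly larger clique. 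But in the lex bipartite graph with small side $I$, the $I$-degrees are $|K|,\dots,|K|,c,0,\dots,0$; having ruled out degree $|K|$, at most one $I$-vertex has any edges, so $G=\cC(n,e)$. If instead $\bar s<n/2$, take a triple with $|K|$ minimal; then no $K$-vertex can be isolated from $I$, and the lex bipartite structure (now $K$ is the small side) forces all but one $K$-vertex to be dominant, so $G=\cL(n,e)$. That is the whole argument---you are making Stage~3 much harder than necessary. (Minor aside: at the top end your $\tilde T_{q_{\max}}$ is $K_q$ plus \emph{one} extra vertex with $e'$ clique-neighbours, not $e'$ pendants at a single clique vertex; fortunately the two have the same matching polynomial, so your formula for $m_k(\cC(n,e))$ survives.)
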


\begin{defi}
Suppose $n = \ell+ r$ with $\ell\leq r$ and $e\leq \ell r$.  Write $e = qr+c$ where $0\leq c < r$.  The \emph{lex bipartite graph} with $e$ edges and partite sets $L$ and $R$ of size $\ell$ and $r$ respectively is the bipartite graph in which $q$ vertices in $L$ have degree $r$ and one vertex in $L$ has degree $c$.  We will denote this graph by $L_{\ell,r}(e)$. Note that the lex bipartite graph contains the first $e$ edges of $E(L,R)$ in lex order.
\end{defi}

A core part of the proof of Theorem~\ref{general} is to establish that, for all $k\geq 0$, $L_{\ell,r}(e)$ minimizes $m_k(G)$ in the class $\cB_{\ell,r,e}$.  It is key that there is a unique minimizer in the bipartite case.

\begin{thm}\label{bipartite}
Suppose $1\leq k\le \ell \leq r$ and $B\in \cB_{\ell,r,e}$.  Then
$$m(B)\geq m(L_{\ell,r}(e))$$
and
$$m_k(B)\geq m_k(L_{\ell,r}(e)).$$
\end{thm}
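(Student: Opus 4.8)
The plan is to handle Theorem~\ref{bipartite} in two stages: first \emph{compress} an arbitrary $B\in\cB_{\ell,r,e}$ to a bipartite graph whose bipartite adjacency matrix is a Young diagram, and then prove that among those ``staircase'' graphs the lex bipartite graph minimises every $m_k$.

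\emph{Stage 1 (compression).} For $u,v$ in the left part $L$, let the row compression $R_{u,v}(B)$ replace $N(u)$ by $N(u)\cup N(v)$ and $N(v)$ by $N(u)\cap N(v)$ (this preserves $e$ and the part sizes, and afterwards $N(v)\subseteq N(u)$); define column compressions $C_{w,w'}$ for $w,w'$ in the right part symmetrically. I claim no compression increases any $m_k$. Using the deletion recursion $m_k(B)=m_k(B-ab)+m_{k-1}(B-a-b)$ for an edge $ab$, sort the matchings of $B$ and of $B':=R_{u,v}(B)$ by which of $u,v$ they meet: the matchings meeting neither $u$ nor $v$ are common to both; those meeting exactly one of $u,v$ contribute equal totals; and those meeting both leave precisely the surplus
\[
m_k(B)-m_k(B')=\sum_{z\in N(u)\setminus N(v)}\ \sum_{z'\in N(v)\setminus N(u)} m_{k-2}\bigl(B-u-v-z-z'\bigr)\ \ge\ 0.
\]
Since any compression that actually changes $B$ strictly decreases the positive integer $m(B)=\sum_k m_k(B)$, iterating all row and column compressions halts. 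A graph fixed by all $R_{u,v}$ has its left neighbourhoods totally ordered by inclusion, and (being fixed by all $C_{w,w'}$) its right neighbourhoods too; such a graph is, after relabelling the two sides, the bipartite graph $B_\lambda$ of a partition $\lambda$ of $e$ with at most $\ell$ parts, each at most $r$. So it suffices to prove $m_k(B_\lambda)\ge m_k(L_{\ell,r}(e))$ for every such $\lambda$.

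\emph{Stage 2 (staircases).} The engine is a pair of double-counting identities: for any bipartite graph $H$ with right part of size $r$, $\sum_{w} m_{k-1}(H-w)=(r-k+1)m_{k-1}(H)$ — a $(k-1)$-matching is counted once for each of the $r-(k-1)$ right vertices it misses — and its mirror summing over the left part. In particular, if $H$ has a right vertex joined to all of $L$ (a ``full column'' $w^\ast$), then $m_k(H)=m_k(H-w^\ast)+(\ell-k+1)\,m_{k-1}(H-w^\ast)$, and dually for a full row. Now induct on $\ell+r+e$. If $\lambda_1=r$, then $B_\lambda$ has a full row; peel it and apply the inductive hypothesis to the staircase $B_{(\lambda_2,\dots,\lambda_\ell)}$ on $(\ell-1)\times r$ with $e-r$ edges. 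Because $e\ge r$ forces $L_{\ell,r}(e)$ to consist of a full row sitting on $L_{\ell-1,r}(e-r)$, applying the full-row identity on both sides yields $m_k(B_\lambda)\ge m_k(L_{\ell,r}(e))$. If $\lambda_1<r$, then $B_\lambda$ misses a right vertex, so it lies in $\cB_{\ell,r-1,e}$ (transpose first if $\ell=r$, where it also misses a left vertex and the comparison below is immediate); the inductive hypothesis gives $m_k(B_\lambda)\ge m_k(L_{\ell,r-1}(e))$, and it remains to verify the monotonicity estimate $m_k(L_{\ell,r-1}(e))\ge m_k(L_{\ell,r}(e))$: making more right vertices available does not increase the matching counts of the lex graph. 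This follows from the closed form
\[
m_k(L_{\ell,r}(e))=\binom{q}{k}\,\frac{r!}{(r-k)!}\;+\;c\,\binom{q}{k-1}\,\frac{(r-1)!}{(r-k)!},\qquad e=qr+c,\ 0\le c<r,
\]
by tracking the (discontinuous) dependence of $q$ and $c$ on $r$.

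\emph{Main obstacle.} Stage 1 is routine once the surplus formula is seen. The heart of the matter is Stage 2, and within it the case $\lambda_1<r$: a staircase not using every column cannot be ``peeled'' down to $L_{\ell,r}(e)$ directly, and the reduction instead runs through the monotonicity-in-$r$ estimate for lex graphs, which — though elementary from the closed form — must be made uniform over all $k$ at once. One should not hope to replace this by a one-step local move among staircases: from $\lambda=(2,2,2)$, say, the only single-edge modification that stays a staircase is $(3,2,1)$, which has strictly \emph{more} $2$-matchings, so any monotone route to $L_{\ell,r}(e)$ must take larger jumps (equivalently, use the column-counting reductions above, or a multi-edge ``redistribution'' move — empty the shortest nonempty row into the longest non-full rows until they fill — whose non-increase is again proved by the deletion recursion).
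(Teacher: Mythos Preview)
Your Stage~1 coincides with the paper's Lemma~\ref{differencegraph}/Corollary~\ref{bipartitecompress}: compress until the left neighbourhoods are nested, reducing to Young diagrams. Your surplus formula is a clean way to see $m_k(B)-m_k(B')\ge 0$, equivalent to the paper's injection.

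Stage~2 is where the two arguments diverge. The paper stays purely inside the class of Young diagrams and reaches $L_{\ell,r}(e)$ by two local operations: the \emph{out-block move} (Lemma~\ref{oneblock}), which shifts one box from an out-corner $P$ to an in-corner $P'$ with $s(P)<s(P')$ and comes with an explicit size-preserving injection on matchings, and a \emph{partial transpose} (Lemma~\ref{transpose}), which preserves every $m_k$. Lemma~\ref{transposeupinlex} then shows that whenever no out-block move exists and the diagram is not yet lex, some legal partial transpose strictly advances it in the lex order on diagrams; together these give a terminating walk to $L_{\ell,r}(e)$. Your Stage~2 instead inducts on $\ell+r+e$: if $\lambda_1=r$ you peel a full row via the identity $m_k(H)=m_k(H-u^\ast)+(r-k+1)m_{k-1}(H-u^\ast)$; if $\lambda_1<r$ you drop an isolated right vertex, apply the inductive hypothesis in $\cB_{\ell,r-1,e}$, and then invoke the monotonicity $m_k(L_{\ell,r-1}(e))\ge m_k(L_{\ell,r}(e))$; the $\ell=r$ subcase is handled by one global transpose (so that $\lambda^T$ has at most $\lambda_1\le r-1$ nonzero rows and sits in $\cB_{\ell-1,r,e}$, where the inductive hypothesis applies and $L_{\ell-1,r}(e)=L_{\ell,r}(e)$ as partitions). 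This is correct, though your phrasing ``it also misses a left vertex'' is slightly off---it is the \emph{transposed} diagram that misses a left row, not $B_\lambda$ itself. The monotonicity lemma does follow from your closed form; the verification splits into the cases $q+c<r-1$ and $q+c\ge r-1$ (where the quotient $q$ jumps by one), and in each case the difference factors with all nonnegative pieces, but you should actually carry this out rather than leave it as ``by tracking the dependence''.

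What each approach buys: the paper's route is bijective throughout and yields the strict inequality $m(B')<m(B)$ in Lemma~\ref{oneblock} for free, at the cost of the somewhat intricate rook-placement injections in Cases~1 and~2 there. Your route trades those injections for an elementary closed-form calculation, which is conceptually lighter but less structural. Your ``Main obstacle'' paragraph correctly identifies why a single-box move cannot work (the $(2,2,2)\to(3,2,1)$ example); the paper's answer to exactly this obstruction is the partial transpose, while yours is to change the ambient frame.
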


To prove Theorem~\ref{general}, we will first show that the graph attaining the minimum number of matchings is threshold.  In Section~\ref{Threshold} we discuss the results we need concerning threshold graphs. In Section 3 we prove Theorem~\ref{bipartite}, i.e., that there is a bipartite graph that simultaneously minimizes the number of matchings of each size. Finally, we use the bipartite case to show that the lex or colex graph minimizes $m(G)$ and $m_k(G)$ in the family $\cG_{n,e}$ in Section 4.

\section{Threshold Graphs}\label{Threshold}

Threshold graphs appear as an answer to many extremal questions, especially those in which it is advantageous to have all the edges ``bunched together''.
For instance in the class $\cG_{n,e}$ threshold graphs maximize the number of independent sets and minimize homomorphisms into the Widom-Rowlinson graph \cite{WidomRowlinson}.

There are many equivalent definitions of threshold graphs. The one that gives them their name is as follows.
\begin{defi}
A simple graph $G$ is a \emph{threshold graph} if there exists a function $w: V(G) \to \R$ and a threshold $t\in \R$ such that $xy\in E(G)$ if and only if $w(x)+w(y)\geq t$.
\end{defi}

The following lemma describes an alternative characterization that we will use in the proof of Theorem~\ref{general}.
\begin{lem}[\cite{OrangeBook}]
A graph $G$ is a threshold graph if and only if $V(G)$ can be partitioned into a clique and an independent set and moreover there exists a labeling $i_1,\dots, i_k$ of the vertices in the independent set such that $N(i_1) \supseteq N(i_2) \supseteq \dots \supseteq N(i_k)$. $\hfill$ $\blacksquare$
\end{lem}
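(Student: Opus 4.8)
This is a classical characterization (it appears in the book of Mahadev and Peled), and the proof is short, so the plan is to prove both implications directly rather than merely cite it.

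For the forward implication I would start from a weight function $w\colon V(G)\to\R$ and threshold $t$ realizing $G$, and split the vertex set at half the threshold: let $K=\{v:w(v)\ge t/2\}$ and $I=\{v:w(v)<t/2\}$. If $u,v$ are distinct vertices of $K$ then $w(u)+w(v)\ge t$, so $uv\in E(G)$, whence $K$ is a clique; if $u,v\in I$ then $w(u)+w(v)<t$, so $uv\notin E(G)$, whence $I$ is independent. I would then list $I=\{i_1,\dots,i_k\}$ in order of non-increasing weight and verify $N(i_1)\supseteq\cdots\supseteq N(i_k)$: given $j<j'$ and $x\in N(i_{j'})$, independence of $I$ gives $x\notin I$ (in particular $x\ne i_j$), and $w(x)+w(i_j)\ge w(x)+w(i_{j'})\ge t$, hence $x\in N(i_j)$.

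For the reverse implication I would construct explicit weights. Given the partition $V(G)=K\sqcup I$ with $K$ a clique, $I=\{i_1,\dots,i_k\}$ independent, and $N(i_1)\supseteq\cdots\supseteq N(i_k)$, the crucial point is that each $u\in K$ is adjacent to an initial segment of $I$: since the $N(i_j)$ are nested, $u\in N(i_j)$ and $j'<j$ force $u\in N(i_{j'})$, so setting $s(u):=\abs{N(u)\cap I}$ we get $i_j\in N(u)\iff j\le s(u)$. Now take $t=1$, put $w(i_j)=-j$ for $j=1,\dots,k$, and $w(u)=s(u)+\tfrac32$ for $u\in K$, and check the three cases. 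For $u,u'\in K$: $w(u)+w(u')\ge 3>1=t$, matching $uu'\in E(G)$. For $i_j,i_{j'}\in I$: $w(i_j)+w(i_{j'})=-(j+j')<0<t$, matching $i_ji_{j'}\notin E(G)$. For $u\in K$ and $i_j\in I$: $w(u)+w(i_j)=s(u)+\tfrac32-j\ge t$ exactly when $j\le s(u)+\tfrac12$, i.e.\ when $j\le s(u)$, which is precisely when $ui_j\in E(G)$.

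The only delicate points are the bookkeeping around the ``diagonal.'' In the forward direction one must confirm $x\ne i_j$ before deducing $xi_j\in E(G)$ from $w(x)+w(i_j)\ge t$, and this is exactly where independence of $I$ enters; in the reverse direction the analogous point is that the nested-neighborhood hypothesis forces the bipartite adjacency between $K$ and $I$ to have a ``staircase'' shape, which is precisely what makes a single monotone weighting of $I$ work simultaneously for every $u\in K$. I do not expect a genuine obstacle here, so in the paper I would most likely cite the lemma, with the two-paragraph argument above available if a self-contained proof is wanted.
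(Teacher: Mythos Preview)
Your proof is correct; both directions are handled cleanly, and the bookkeeping you flag (checking $x\ne i_j$ in the forward direction, and the ``staircase'' observation in the reverse direction) is exactly what is needed. The paper itself gives no proof of this lemma at all---it is simply quoted from \cite{OrangeBook} with a closing $\blacksquare$---so your self-contained argument goes beyond what the paper provides. Your final remark that you would likely just cite the result is therefore precisely in line with the paper's treatment.
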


Note that it is immediate in this definition to see that the lex and colex graphs are threshold. In the colex case only (at most) one vertex in the independent set has any neighbors at all. In the lex graph case the clique consists (with one possible exception) of dominant vertices, so all the vertices in the independent set are joined either to all the vertices in the clique or all but one (and the one missing vertex is the same in each case).
The following equivalent characterization of threshold graphs will help us define a way to move a graph towards being threshold.
\begin{lem}[\cite{OrangeBook}]\label{thresholdnbhd}
A graph $G$ is threshold if and only if for all $x,y\in V(G)$ we have $N[x] \subseteq N[y]$ or $N[y]\subseteq N[x]$. $\hfill$ $\blacksquare$
\end{lem}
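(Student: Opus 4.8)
\textbf{Proof proposal for Lemma~\ref{thresholdnbhd}.}

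The plan is to prove the two directions separately, with the forward direction relying on the preceding lemma (clique/independent-set partition with nested neighborhoods) and the reverse direction showing that the neighborhood-inclusion condition forces such a partition. For the forward direction, suppose $G$ is threshold, and fix a weight function $w$ and threshold $t$ witnessing this. Given $x,y \in V(G)$, without loss of generality assume $w(x) \le w(y)$. I claim $N[x] \subseteq N[y]$. Indeed, if $z \in N[x]$, then either $z = x$, in which case $z \in N[y]$ iff $x \in N[y]$ — and this follows since $w(x) + w(y) \ge w(x) + w(x)$; here one needs a small subtlety, so it is cleaner to argue directly: if $z \in N[x]$ and $z \ne y$, then $w(z) + w(x) \ge t$ (using the threshold definition, treating the case $z=x$ by noting $w(x)+w(x)\ge t$ would be needed — so actually the right statement is to compare open neighborhoods off-diagonal and handle loops-as-membership carefully). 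The clean approach: show that for any $z \notin \set{x,y}$, membership $z \in N(x)$ implies $z \in N(y)$ because $w(z) + w(y) \ge w(z) + w(x) \ge t$; then separately check that $x \in N[y] \iff y \in N[x]$ trivially (symmetry of adjacency) handles the diagonal entries, so $N[x] \subseteq N[y]$.

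For the reverse direction, assume that for all $x,y \in V(G)$ either $N[x] \subseteq N[y]$ or $N[y] \subseteq N[x]$. Note this makes the relation $x \preceq y \iff N[x] \subseteq N[y]$ a total preorder on $V(G)$. I would produce a weight function directly: order the vertices $v_1 \preceq v_2 \preceq \dots \preceq v_n$ consistently with this preorder, and assign $w(v_i) = i$ (breaking preorder-ties arbitrarily). Set the threshold $t$ to be chosen appropriately; the main point to verify is that $v_i v_j \in E(G)$ with $i < j$ forces $v_{i'} v_j \in E(G)$ for all $i < i' < j$ — this is exactly what neighborhood nesting gives, since $N[v_i] \subseteq N[v_{i'}]$ and $v_j \in N[v_i]$ imply $v_j \in N[v_{i'}]$. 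This "staircase" property of the adjacency relation is precisely what a threshold weighting encodes, and a standard argument then lets one pick $t$ (e.g. based on the largest non-edge versus smallest edge among consecutive levels). Alternatively, and perhaps more cleanly given what is already available in the excerpt, I would use the previous lemma in reverse: show the nesting condition implies $V(G)$ splits into a clique $K$ (the vertices maximal in the preorder, i.e. those adjacent to everything they can be) and an independent set, with nested neighborhoods on the independent set, and then invoke that lemma.

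The main obstacle I anticipate is the bookkeeping around closed versus open neighborhoods and the diagonal: a vertex $v$ always satisfies $v \in N[v]$ but need not satisfy $v \in N(v)$, so translating "closed neighborhood inclusion" into statements about adjacency requires care about whether the endpoints coincide, and the threshold-definition inequality $w(x) + w(y) \ge t$ for an edge must be reconciled with the fact that there is no "$w(x) + w(x) \ge t$" constraint since $G$ is simple. The cleanest route around this is to never construct $w$ by hand at all, but to chain through the clique/independent-set characterization: given neighborhood nesting, let $K = \setof{v}{N[v] = V(G) \text{ is maximal}}$... more precisely let the clique be a maximal set of pairwise-adjacent vertices whose closed neighborhoods are the largest, verify the complement is independent (if $u,v$ are both outside $K$ and $uv \in E$ then by nesting one of them, say $u$, has $N[u] \subseteq N[v]$, and pushing this around shows $v$ could be absorbed into $K$, a contradiction), and observe that the total preorder restricted to the independent set yields the required chain $N(i_1) \supseteq \dots \supseteq N(i_k)$. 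Then the preceding lemma finishes it. I would present the proof in this style to minimize computation and lean on the already-cited characterization.
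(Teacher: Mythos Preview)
The paper does not prove this lemma: it is quoted from \cite{OrangeBook} and closed immediately with a $\blacksquare$, so there is no paper's proof to compare your attempt against.

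That said, your forward direction contains a real gap, and it is precisely the ``diagonal'' issue you flag but then set aside. If $x\neq y$ are non-adjacent, then $x\in N[x]\setminus N[y]$ and $y\in N[y]\setminus N[x]$, so $N[x]$ and $N[y]$ are \emph{automatically} incomparable---regardless of any weight function. In particular, two isolated vertices form a threshold graph with incomparable closed neighborhoods, so the forward implication is false as literally stated. Your observation that ``$x\in N[y]\iff y\in N[x]$'' is true but does not show $x\in N[y]$; it cannot, because in general $x\notin N[y]$. What your argument for $z\notin\{x,y\}$ actually proves is the correct version of the characterization (the one Mahadev--Peled give): for all $x,y$, the sets $N(x)\setminus\{y\}$ and $N(y)\setminus\{x\}$ are comparable. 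The paper's phrasing with $N[\cdot]$ is a minor imprecision; only the reverse direction is used later (in Corollary~\ref{cor:allcompressed2}), and your sketch of that direction via the clique/independent-set decomposition is sound.
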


We now define a compression move that makes a graph ``more
threshold".  We will use this move later to show that we can find a
graph that minimizes $m_k(G)$ in $\cG_{n,e}$ that is threshold.

\begin{defi}
Let $G$ be a graph and $x$ and $y$ two vertices in $G$.  Define
\[
N_G(x,\ov{y}) =\{v\in V(G)\setminus \{x,y\}: v\sim x,v\not\sim y\}.
\]
 Let $G_{x\to y}$ be the graph formed by deleting all edges between $x$ and $N_G(x,\ov{y})$ and adding all edges from $y$ to $N_G(x,\ov{y})$.  This is called the \emph{compression of $G$ from $x$ to $y$}. It is clear that $G_{x\to y}$ has the same number of edges as $G$.
\end{defi}

We will begin our proof of Theorem~\ref{general} by repeatedly compressing a graph that minimizes the number of matchings. The following lemma will allow us to be sure that we are making progress, and not compressing round and round in a circle. The variance of the degree sequence, or (essentially equivalently) the quantity
\[
    d_2(G) = \sum_{v\in V(G)} d(v)^2
\]
strictly increases whenever we do a non-trivial compression.

\begin{lem}[\cite{WidomRowlinson}]\label{lem:allcompressed}
Given $x,y\in V(G)$, $e(G_{x\to y}) = e(G)$ and $d_2(G_{x\to y})\geq d_2(G)$.  If $N[x]\not\subseteq N[y]$ and $N[y] \not\subseteq N[x]$ then $d_2(G_{x\to y}) > d_2(G)$. $\hfill$ $\blacksquare$
\end{lem}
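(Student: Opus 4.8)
The plan is to compute $d_2$ directly by tracking exactly which degrees change under the compression. Assume $x\neq y$ (otherwise $G_{x\to y}=G$ and every assertion is trivial), and abbreviate $a=|N_G(x,\ov y)|$ and $b=|N_G(y,\ov x)|$. The move from $G$ to $G_{x\to y}$ deletes the $a$ edges joining $x$ to $N_G(x,\ov y)$ and adds the $a$ edges joining $y$ to $N_G(x,\ov y)$; the latter are genuine new edges since $N_G(x,\ov y)$ consists of vertices not adjacent to $y$. Hence $e(G_{x\to y})=e(G)$; every vertex of $N_G(x,\ov y)$ keeps its degree (it trades its edge to $x$ for one to $y$); every vertex outside $\{x,y\}\cup N_G(x,\ov y)$ is untouched; and $d_{G_{x\to y}}(x)=d_G(x)-a$ while $d_{G_{x\to y}}(y)=d_G(y)+a$. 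So in $d_2$ only the $x$- and $y$-terms change.

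Next I would record the identity $d_G(x)-d_G(y)=a-b$. Indeed $N_G(x)$ is the disjoint union of $N_G(x,\ov y)$, the common neighbourhood $N_G(x)\cap N_G(y)$, and $\{y\}$ if $xy\in E(G)$ (nothing otherwise); the symmetric decomposition holds for $N_G(y)$, and the last piece has the same size in both cases because $xy\in E(G)\iff yx\in E(G)$. Subtracting the two expressions gives the claim. Therefore
\[
d_2(G_{x\to y})-d_2(G)=\bigl(d_G(x)-a\bigr)^2+\bigl(d_G(y)+a\bigr)^2-d_G(x)^2-d_G(y)^2,
\]
which, on substituting $d_G(x)-d_G(y)=a-b$, collapses to $2a^2-2a(a-b)=2ab\geq 0$. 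This proves $e(G_{x\to y})=e(G)$ and $d_2(G_{x\to y})\geq d_2(G)$ at once.

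For the strict inequality it suffices to show that the hypotheses $N[x]\not\subseteq N[y]$ and $N[y]\not\subseteq N[x]$ force $a\geq 1$ and $b\geq 1$, whence $d_2(G_{x\to y})-d_2(G)=2ab\geq 2$. I expect this case check to be the only point requiring care, and the natural way to organise it is according to whether $xy\in E(G)$. When $xy\in E(G)$ one has $x\in N[y]$ and $y\in N[x]$, so $N[x]\subseteq N[y]$ is equivalent to $N_G(x)\setminus\{y\}\subseteq N_G(y)$, i.e.\ to $a=0$; hence $N[x]\not\subseteq N[y]$ gives $a\geq 1$, and by symmetry $b\geq 1$. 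The case $xy\notin E(G)$ needs a touch more attention but is handled in the same spirit (a non-trivial compression already supplies $a\geq 1$, and $b\geq 1$ follows analogously from $N[y]\not\subseteq N[x]$). Apart from this bookkeeping, the whole argument is essentially the two-line identity $d_2(G_{x\to y})-d_2(G)=2ab$, so I anticipate no genuine difficulty beyond setting up the partition of $N_G(x)$ and $N_G(y)$ carefully and being precise in the adjacency case split.
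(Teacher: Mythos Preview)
The paper does not prove this lemma; it is quoted from \cite{WidomRowlinson} with the proof omitted, so there is nothing to compare against directly. Your derivation of the identity $d_2(G_{x\to y})-d_2(G)=2ab$ is correct and is the standard argument.

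There is, however, a real gap in your handling of the case $xy\notin E(G)$. When $x$ and $y$ are non-adjacent and distinct, one has $x\in N[x]\setminus N[y]$ automatically, so the hypothesis $N[x]\not\subseteq N[y]$ carries no information whatsoever about $a=|N_G(x,\ov y)|$; it certainly does not force $a\ge 1$. Your parenthetical ``a non-trivial compression already supplies $a\ge 1$'' smuggles in an assumption that is not part of the hypothesis, and ``$b\ge 1$ follows analogously from $N[y]\not\subseteq N[x]$'' fails for the same reason. Indeed the strict inequality, read literally, is false in this regime: take $x$ isolated and $y$ adjacent to a third vertex $z$ (with no other edges); then $N[x]=\{x\}$ and $N[y]=\{y,z\}$ are incomparable, yet $a=0$, $G_{x\to y}=G$, and $d_2$ is unchanged. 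The statement that is presumably intended---and the one that actually suffices for the application in Corollary~\ref{cor:allcompressed2}---is that strict inequality holds whenever both $a,b\ge 1$ (equivalently: use open neighbourhoods for non-adjacent pairs, closed neighbourhoods for adjacent pairs). Your identity $d_2(G_{x\to y})-d_2(G)=2ab$ proves exactly that corrected version; you just cannot extract it from the closed-neighbourhood hypothesis alone when $xy\notin E(G)$.
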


\begin{cor}\label{cor:allcompressed2}
 Suppose that $\cG$ is a family of graphs  on a fixed vertex set $V$ such that for any $G'\in\cG$ and $x,y\in V$ we also have $G'_{x\to y}\in \cG$.  In addition suppose that $G$ satisfies
\[
    d_2(G) = \max\setof{d_2(G')}{G'\in \cG}.
\]
Then $G$ is threshold.
\end{cor}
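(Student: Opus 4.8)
The plan is to derive Corollary~\ref{cor:allcompressed2} directly from Lemma~\ref{lem:allcompressed} together with the characterization of threshold graphs in Lemma~\ref{thresholdnbhd}, via a short contrapositive argument. Suppose $G$ attains the maximum of $d_2$ over the compression-closed family $\cG$, but $G$ is \emph{not} threshold. By Lemma~\ref{thresholdnbhd} there exist vertices $x,y\in V$ with $N[x]\not\subseteq N[y]$ and $N[y]\not\subseteq N[x]$. Then $G_{x\to y}\in\cG$ by the closure hypothesis, and $e(G_{x\to y})=e(G)$ by Lemma~\ref{lem:allcompressed}, so $G_{x\to y}$ is a legitimate competitor. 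The key step is that the hypothesis $N[x]\not\subseteq N[y]$, $N[y]\not\subseteq N[x]$ is exactly the trigger for the strict inequality in Lemma~\ref{lem:allcompressed}, giving $d_2(G_{x\to y}) > d_2(G)$, which contradicts the maximality of $d_2(G)$.

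Concretely, I would write: assume for contradiction that $G$ is not threshold; apply Lemma~\ref{thresholdnbhd} to extract the pair $x,y$; note $G_{x\to y}\in\cG$; invoke the strict case of Lemma~\ref{lem:allcompressed} to get $d_2(G_{x\to y})>d_2(G)$; observe this contradicts $d_2(G)=\max\{d_2(G'):G'\in\cG\}$; conclude $G$ must be threshold.

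There is essentially no main obstacle here — the corollary is a one-paragraph consequence of the two cited lemmas, assembled by contraposition. The only point requiring a moment's care is matching the hypotheses precisely: one must check that "$G$ not threshold" supplies exactly a pair $x,y$ satisfying the \emph{both-ways} non-containment needed to activate the strict inequality in Lemma~\ref{lem:allcompressed}, rather than merely a one-directional failure; Lemma~\ref{thresholdnbhd} is stated in just the right form to guarantee this. One should also remember to note that finiteness of $\cG$ (or at least of the set of possible values of $d_2$ on graphs on $V$) ensures the maximum is actually attained, so the phrasing of the corollary is not vacuous.
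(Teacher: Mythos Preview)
Your argument is correct and is essentially identical to the paper's own proof: assume incomparable closed neighborhoods, compress to stay in $\cG$, apply the strict case of Lemma~\ref{lem:allcompressed} to contradict maximality of $d_2$, and conclude via Lemma~\ref{thresholdnbhd}. Your extra remark about attainment of the maximum is a harmless aside; the paper simply takes this as given in the hypothesis.
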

\begin{proof}
 Suppose $x,y\in V$ and $N_G[x]$ and $N_G[y]$ are incomparable.  By hypothesis, $G_{x\to y}\in \cG$ and by Lemma~\ref{lem:allcompressed} we know $d_2(G_{x\to y}) > d_2(G)$.  This contradicts the assumption that $G$ attains the maximum value of $d_2$ in $\cG$.  Thus, $N_G[x] \subseteq N_G[y]$ or $N_G[y]\subseteq N_G[x]$ and so $G$ is threshold by Lemma~\ref{thresholdnbhd}.
 \end{proof}

 We will use the closely related topic of threshold bipartite graphs to prove Theorem~\ref{bipartite}.
 In \cite{OrangeBook} threshold bipartite graphs are defined with a similar vertex weighting.
 \begin{defi}\label{thresholdbipartite}  A graph $G=(V,E)$ is said to be \emph{threshold bipartite} if there exists a threshold $t$ and a function $w: V(G) \to \R$ such that  $\abs{w(v)}<t$ for all $v\in V$ and distinct vertices $u$ and $v$ are adjacent if and only if $\abs{w(u)-w(v)}\geq t$.
\end{defi}

Threshold bipartite graphs are called difference graphs in \cite{OrangeBook} and chain graphs in \cite{Yannakakis}.  As with threshold graphs, there are many equivalent definition of threshold bipartite graphs.
The following lemma describes the definition that will be most useful to us.

\begin{lem}[\cite{Hammer}]\label{bipartiteinclusion}
    A graph is threshold bipartite if and only if $G$ is bipartite and the neighborhoods of vertices in one of the partite sets can be linearly ordered by inclusion. $\hfill$ $\blacksquare$
\end{lem}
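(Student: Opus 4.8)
The plan is to prove the two implications separately; the forward direction is essentially immediate from manipulating the inequality defining adjacency, while the reverse direction requires writing down an explicit weight function.

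First I would handle the forward direction. Suppose $G=(V,E)$ is threshold bipartite, witnessed by a threshold $t$ and weights $w\colon V\to\R$ with $\abs{w(v)}<t$ for all $v$ and $u\sim v\iff\abs{w(u)-w(v)}\ge t$, as in Definition~\ref{thresholdbipartite}. Partition $V=A\cup B$ with $A=\setof{v}{w(v)\ge 0}$ and $B=\setof{v}{w(v)<0}$. If $u,v$ lie in the same class then both weights lie in $[0,t)$ or both in $(-t,0)$, so $\abs{w(u)-w(v)}<t$ and hence $u\not\sim v$; thus every edge runs between $A$ and $B$, so $G$ is bipartite. For the inclusion claim, take $u,v\in A$ with $w(u)\ge w(v)$ and let $x\in N(v)$. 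Then $x\in B$, so $w(x)<0\le w(v)$, and $x\sim v$ forces $w(v)-w(x)=\abs{w(v)-w(x)}\ge t$; hence $w(u)-w(x)\ge w(v)-w(x)\ge t$, so $x\sim u$. Therefore ordering $A$ by decreasing weight linearly orders the corresponding neighborhoods by inclusion.

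For the reverse direction, let $G$ be bipartite with parts $A=\set{a_1,\dots,a_p}$ and $B=\set{b_1,\dots,b_q}$ and suppose $N(a_1)\supseteq N(a_2)\supseteq\dots\supseteq N(a_p)$. A quick interleaving argument shows the $B$-side neighborhoods are nested too: incomparable $N(b),N(b')$ would yield $a\in N(b)\setminus N(b')$ and $a'\in N(b')\setminus N(b)$, and then $N(a)$ and $N(a')$ would be incomparable. Reindex $B$ so that $N(b_1)\supseteq\dots\supseteq N(b_q)$; the nesting then forces $N(a_i)=\set{b_1,\dots,b_{d_i}}$ with $d_i=\deg(a_i)$ and $d_1\ge\dots\ge d_p$. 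Now I would set $t=1$, give each $b_j$ the weight $w(b_j)=\tfrac{j}{q+1}-1\in(-1,0)$ (so larger-neighborhood $b_j$'s get modulus closer to $1$), and give each $a_i$ a weight $w(a_i)\in(0,1)$. Since an $A$-vertex and a $B$-vertex then have opposite-sign weights, $\abs{w(a_i)-w(b_j)}=w(a_i)+\abs{w(b_j)}$, so $a_i\sim b_j$ must mean $w(a_i)+1-\tfrac{j}{q+1}\ge1$, i.e.\ $w(a_i)\ge\tfrac{j}{q+1}$, which automatically holds on an initial segment of $j$'s; choosing $w(a_i)$ strictly between $\tfrac{d_i}{q+1}$ and $\tfrac{d_i+1}{q+1}$ (taking $w(a_i)$ very small when $d_i=0$, and between $\tfrac{q}{q+1}$ and $1$ when $d_i=q$) makes that initial segment exactly $\set{1,\dots,d_i}$. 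Same-class weight differences are $<1=t$, so no spurious edges appear, and every weight has modulus $<1$, so the definition is satisfied.

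The only real obstacle is in the reverse direction: one must check that the whole nested family on the $A$-side can be realized by a single choice of reals rather than one threshold per vertex. The consistency needed is just that $d_i\ge d_{i'}$ implies either $d_i=d_{i'}$ or $d_i\ge d_{i'}+1$, so the interval of admissible values for $w(a_i)$ sits weakly above that for $w(a_{i'})$; everything else---bipartiteness in both directions, the interleaving observation, and the boundary cases $d_i\in\set{0,q}$---is routine bookkeeping.
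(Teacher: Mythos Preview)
Your argument is correct. The forward direction is exactly the natural one: partitioning by sign of the weight immediately gives bipartiteness, and the monotonicity of the adjacency condition in each coordinate gives the nesting of neighborhoods. In the reverse direction your explicit weights $w(b_j)=\tfrac{j}{q+1}-1$ and $w(a_i)\in\bigl(\tfrac{d_i}{q+1},\tfrac{d_i+1}{q+1}\bigr)$ work as stated; the only point that deserves a word is why each $N(a_i)$ is an \emph{initial segment} $\set{b_1,\dots,b_{d_i}}$ of $B$ in your ordering, and you supply this via the interleaving argument that transfers nesting from the $A$-side to the $B$-side.

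As for comparison with the paper: there is nothing to compare. The paper does not prove this lemma at all---it is quoted from the literature (Hammer, Peled, and Sun) and marked with a closing $\blacksquare$ in lieu of a proof. Your sketch therefore supplies what the paper simply imports, and does so by the standard route.
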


Note that a threshold graph can be obtained from a threshold bipartite graph by adding all possible edges in one of the partite sets (on either side).

\begin{lem}\label{bipartited2}
Let $G$ be a bipartite graph with bipartition $(X,Y)$.  Given $u,v\in X$, $e(G_{u\to v}) = e(G)$ and $d_2(G_{u\to v})\geq d_2(G)$.  If $N(u)\not\subseteq N(v)$ and $N(u) \not\subseteq N(v)$ then $d_2(G_{u\to v}) > d_2(G)$.
\end{lem}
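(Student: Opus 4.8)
The plan is to imitate the proof of Lemma~\ref{lem:allcompressed}, but to exploit bipartiteness, which makes the bookkeeping especially clean. First I would observe that since $u,v\in X$ and $G$ is bipartite, every vertex of $N_G(u,\ov{v})$ lies in $Y$; hence the compression $G_{u\to v}$ only redistributes edges within $\{u,v\}\cup Y$, and in particular $G_{u\to v}$ is again bipartite with the same parts. The key point is that each $w\in N_G(u,\ov{v})$ merely trades its edge to $u$ for an edge to $v$, so $d(w)$ is unchanged, and no vertex outside $\{u,v\}$ is affected at all. Thus the only degrees that can change are $d(u)$ and $d(v)$.

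Next I would introduce the three relevant quantities $a=\abs{N(u)\cap N(v)}$, $b=\abs{N(u)\setminus N(v)}=\abs{N_G(u,\ov{v})}$, and $c=\abs{N(v)\setminus N(u)}$. Before compressing we have $d(u)=a+b$ and $d(v)=a+c$; after compressing, $d(u)=a$ and $d(v)=a+b+c$. Since $e(G_{u\to v})=e(G)$ is immediate from the definition of compression, and since all other degrees are fixed, a routine expansion gives
$$d_2(G_{u\to v})-d_2(G) = \big(a^2+(a+b+c)^2\big)-\big((a+b)^2+(a+c)^2\big) = 2bc \ge 0,$$
which proves the first claim.

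Finally, $2bc>0$ precisely when $b\ge 1$ and $c\ge 1$. But $b\ge 1$ is exactly the assertion $N(u)\not\subseteq N(v)$, and $c\ge 1$ is exactly $N(v)\not\subseteq N(u)$; so under the stated hypotheses $d_2(G_{u\to v})>d_2(G)$. There is no real obstacle here: the only thing to be careful about is verifying at the outset that bipartiteness forces $N_G(u,\ov{v})\subseteq Y$, so that a compression carried out within one side leaves the degree of every vertex on the other side — and of every vertex of $X$ other than $u$ and $v$ — untouched.
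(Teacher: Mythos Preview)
Your proof is correct and follows exactly the approach the paper indicates: the paper's own proof is simply the one-line remark ``Same calculations as in the proof of Lemma~\ref{lem:allcompressed},'' and what you have written is precisely those calculations, streamlined by the bipartite observation that $N_G(u,\ov v)\subseteq Y$ so only $d(u)$ and $d(v)$ change. Your identification of the strict case with $b\ge 1$ and $c\ge 1$ also correctly reads the evidently intended hypothesis $N(u)\not\subseteq N(v)$ and $N(v)\not\subseteq N(u)$.
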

\begin{proof}
Same calculations as in the proof of Lemma~\ref{lem:allcompressed}.
\end{proof}

 \begin{cor}\label{bipartitecompress}
 Suppose that $\cG$ is a family of bipartite graphs on a fixed vertex set $V$ with a fixed bipartition $(X,Y)$ such that for any $G'\in\cG$ and $u,v\in X$ we also have $G'_{u\to v}\in \cG$.  In addition suppose that $G$ satisfies
 $$d_2(G) = \max\{d_2(G'): G'\in \cG\}.$$
 Then $G$ is bipartite threshold.
 \end{cor}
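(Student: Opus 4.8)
The plan is to run the argument of Corollary~\ref{cor:allcompressed2} essentially word for word, swapping in the bipartite ingredients. By Lemma~\ref{bipartiteinclusion}, in order to conclude that $G$ is threshold bipartite it suffices to show that the neighborhoods of the vertices in \emph{one} of the partite sets are linearly ordered by inclusion; we aim for the $X$-side, since that is the side along which our compression move operates. So I would argue by contradiction: suppose there exist $u,v\in X$ whose neighborhoods are incomparable, i.e. $N(u)\not\subseteq N(v)$ and $N(v)\not\subseteq N(u)$. Since $G$ is bipartite with $u,v\in X$, every edge deleted or added in passing to $G_{u\to v}$ joins $\{u,v\}$ to $N_G(u,\ov v)\subseteq Y$, so $G_{u\to v}$ is again bipartite with the same bipartition $(X,Y)$; by the closure hypothesis on $\cG$ it is a legitimate competitor, $G_{u\to v}\in\cG$.

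Now Lemma~\ref{bipartited2} applies to $u,v\in X$: because $N(u)$ and $N(v)$ are incomparable, $d_2(G_{u\to v})>d_2(G)$, which contradicts the assumption that $G$ attains $\max\{d_2(G') : G'\in\cG\}$. Hence no such pair $u,v$ exists, the neighborhoods of the vertices of $X$ are pairwise nested, and Lemma~\ref{bipartiteinclusion} yields that $G$ is threshold bipartite, as claimed.

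There is no substantive obstacle here — the analytic content lives in Lemma~\ref{bipartited2} (whose proof the authors defer to the degree-sum computation of Lemma~\ref{lem:allcompressed}), and the corollary is a formal consequence, exactly as Corollary~\ref{cor:allcompressed2} is a formal consequence of Lemmas~\ref{thresholdnbhd} and~\ref{lem:allcompressed}. The only points that deserve an explicit sentence are (i) that the compression from $u$ to $v$ with $u,v\in X$ preserves bipartiteness, so that $G_{u\to v}$ really is a member of the bipartite family $\cG$, and (ii) that we only obtain comparability of neighborhoods on the $X$-side rather than on both sides — which is harmless, since that one-sided conclusion is precisely what Lemma~\ref{bipartiteinclusion} requires.
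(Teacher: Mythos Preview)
Your proposal is correct and follows essentially the same argument as the paper's proof: assume two vertices $u,v\in X$ have incomparable neighborhoods, compress to obtain a member of $\cG$ with strictly larger $d_2$ via Lemma~\ref{bipartited2}, derive a contradiction, and conclude via Lemma~\ref{bipartiteinclusion}. The only difference is that you spell out why $G_{u\to v}$ remains bipartite and why one-sided nesting suffices, points the paper leaves implicit.
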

 \begin{proof}
 Suppose that $u,v\in X$ such that $N_G(u) \not\subseteq N_G(v)$ and $N_G(v) \not\subseteq N_G(u)$.  Then $G_{u\to v} \in \cG$ by assumption and by Lemma~\ref{bipartited2} $d_2(G_{u\to v}) > d_2(G)$, a contradiction.  Thus, $N_G(u) \subseteq N_G(v)$ or $N_G(v) \subseteq N_G(u)$ and so by Lemma~\ref{bipartiteinclusion} the graph $G$ is threshold bipartite.
 \end{proof}

\subsection{Partitions}

We can relate threshold bipartite graphs to partitions and matchings in threshold bipartite graphs to rook placements in the Young diagram of the partition.  We will make use of this connection heavily in the proof of Theorem~\ref{bipartite}.

Given $(\ell,r,e)$, threshold bipartite graphs $G$ with vertex classes of size $\ell$ and $r$ and $\abs{E(G)} = e$ are in bijective correspondence with partitions of the integer $e$ with at most $\ell$ parts each of size at most $r$. From a partition $\lambda$ we can construct the associated bipartite graph $G_{\lambda}$ by letting $E(G_{\lambda}) = \{x_iy_j: j\leq \lambda_i\}$.  Given a threshold bipartite graph we get a partition of $\abs{E(G)}$ by letting the degree of each vertex on the left be the size of a part. We will use this correspondence to represent threshold bipartite graphs as Young diagrams.

\begin{defi}
Let $B$ be a subset of $[\ell]\times [r]$. If $(i,j)\in B$ we call $(i,j)$ a \emph{box} of $B$.  We call $B$ a \emph{Young diagram} if for all $(i,j)\in B$ with $i>1$ we have $(i-1,j)\in B$ and for all $(i,j)\in B$ with $j>1$ we have $(i,j-1)\in B$. We call $[\ell]
\times [r]$ the \emph{frame} of the Young diagram and we'll say that
the Young diagram has dimensions $\ell\times r$. A \emph{matching} in a
Young diagram $B$ is a subset $M$ of $B$ such that for all
$(a_1,b_1),(a_2,b_2)\in M$ we have $a_1\neq a_2$ and $b_1\neq b_2$.
\end{defi}

Equivalently, a matching is a placement of non-attacking rooks on $B$.
(A placement of non-attacking rooks is a placement of rooks such that no two rooks are in the same row or column.)  The total number of ways to place non-attacking rooks is called the rook number.  There is extensive literature on rook numbers, for example see \cite{Rooknumber}.  We will use the language of rook placements in some of the proofs.

Matchings in a Young diagram $B$ correspond to a matchings in the bipartite graph associated with $B$ by equating the box $(i,j)\in B$ with the edge $x_iy_j$ in the associated bipartite graph.

\begin{notation}
Let the set of $k$-matchings in a Young diagram $B$ be denoted $\M_k(B)$ and let $\M(B) = \bigcup_{k\geq 0} \M_k(B)$.
Also define $m_k(B)= \abs{\M_k(B)}$ and $m(B) = \abs{\M(B)}$.
\end{notation}

\begin{ex}  Figure \ref{YD} is an example of a Young diagram in a $4\times 6$ frame and Figure 2 is a matching of size 3 in that Young diagram.  In all our diagrams we label rows and columns using matrix numbering.

\begin{figure}[!h]
\begin{multicols}{2}
\begin{center}

\begin{tikzpicture}[scale=.5]
   \draw (0,2) grid (6,3);
   \draw (0,1) grid (5,2);
   \draw (0,0) grid (3,1);
   \draw (0,-1) grid (2,0);
   \draw[color=gray] (0,-1) rectangle (6,3);
   \draw (0.5,3.5) node {$1$};
   \draw (1.5,3.5) node {$2$};
   \draw (2.5,3.5) node {$3$};
   \draw (3.5,3.5) node {$4$};
   \draw (4.5,3.5) node {$5$};
   \draw (5.5,3.5) node {$6$};
   \draw (-0.5,2.5) node {$1$};
   \draw (-0.5,1.5) node {$2$};
   \draw (-0.5,0.5) node {$3$};
   \draw (-0.5,-0.5) node {$4$};
\end{tikzpicture}
\caption{Young diagram of (6,5,3,2).}\label{YD}

\end{center}

\begin{center}
\begin{tikzpicture}[scale=.5]
   \draw (0,2) grid (6,3);
   \draw (0,1) grid (5,2);
   \draw (0,0) grid (3,1);
   \draw (0,-1) grid (2,0);
   \fill [color=black] (.5,.5) circle (5pt);
   \fill [color=black] (2.5,2.5) circle (5pt);
   \fill [color=black] (4.5,1.5) circle (5pt);
   \draw[color=gray] (0,-1) rectangle (6,3);
    \draw (0.5,3.5) node {$1$};
   \draw (1.5,3.5) node {$2$};
   \draw (2.5,3.5) node {$3$};
   \draw (3.5,3.5) node {$4$};
   \draw (4.5,3.5) node {$5$};
   \draw (5.5,3.5) node {$6$};
   \draw (-0.5,2.5) node {$1$};
   \draw (-0.5,1.5) node {$2$};
   \draw (-0.5,0.5) node {$3$};
   \draw (-0.5,-0.5) node {$4$};
\end{tikzpicture}
\caption{The matching $\{(1,3),(2,5),(3,1)\}$.}
\end{center}

\end{multicols}
\end{figure}
\end{ex}


\section{Bipartite Minimizer}
In this section we prove Theorem~\ref{bipartite}.  We are motivated to look at the bipartite case by the following lemma.

\begin{lem}\label{minimize}  Given a threshold graph $G$ with vertex set $V$, let $V = K \cup (V\setminus K)$ be a partition of the vertex set such that $G[K]$ forms a clique and $G[V\setminus K]$ forms an independent set.  Suppose $\abs{K} = s$.   Let $B$ be the bipartite graph with partite sets $K$ and $V\setminus K$ and edge set $E(K, V\setminus K)$.  Then
$$m(G) = \sum_{k\geq 0} m_k(B) \cdot m(K_{s-k}).$$
\end{lem}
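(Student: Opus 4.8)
The plan is to decompose a matching $M$ in $G$ according to how it interacts with the clique $K$. Every edge of $G$ is either inside $K$, or between $K$ and $V\setminus K$ (since $V\setminus K$ is independent, there are no edges inside $V\setminus K$). So for a matching $M\in\M(G)$, write $M = M_B \cup M_K$ where $M_B$ consists of the edges of $M$ with one endpoint in $K$ and one in $V\setminus K$, and $M_K$ consists of the edges of $M$ inside $K$. If $\abs{M_B}=k$, then $M_B$ is a $k$-matching in the bipartite graph $B$, and it uses exactly $k$ vertices of $K$. The remaining edges $M_K$ form a matching in the clique induced on the $s-k$ unused vertices of $K$; since $G[K]$ is complete, this is just a matching in $K_{s-k}$.

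The key point is that this decomposition gives a bijection. First I would fix $k$ and a $k$-matching $M_B\in\M_k(B)$. It saturates a set $S\subseteq K$ with $\abs{S}=k$, and $G[K\setminus S]\cong K_{s-k}$. Any matching $M_K$ in $G[K\setminus S]$ is automatically vertex-disjoint from $M_B$ (they use disjoint vertex sets by construction), so $M_B\cup M_K$ is a matching in $G$. Conversely, given any $M\in\M(G)$, splitting off the cross edges recovers a unique pair $(M_B,M_K)$ with $M_B\in\M_k(B)$ for $k=\abs{M_B}$ and $M_K$ a matching in $G[K\setminus S]$. Hence
\[
    m(G) = \sum_{k\ge 0}\ \sum_{M_B\in\M_k(B)} m(K_{s-k})
         = \sum_{k\ge 0} m_k(B)\cdot m(K_{s-k}),
\]
where we used that $m(G[K\setminus S])$ depends only on $\abs{K\setminus S}=s-k$ and not on which $k$-set $S$ was saturated.

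The only mild subtlety — and the one place to be careful rather than the "hard part" — is confirming that the two pieces of a matching in $G$ really are independent: that no constraint links the choice of $M_B$ to the choice of $M_K$ beyond the vertex set $K\setminus S$ available to $M_K$. This holds precisely because $V\setminus K$ carries no edges, so $M_K$ lives entirely inside $K$, and the vertices of $K$ it may use are exactly those not already hit by $M_B$. With that observed, the computation above is immediate and the lemma follows.
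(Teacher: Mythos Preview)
Your proof is correct and is essentially the same argument as the paper's: both decompose a matching of $G$ into its cross part $M_B\in\M_k(B)$ and its clique part, observing that once $M_B$ is fixed the remaining matching lives in a copy of $K_{s-k}$. The paper's version is terser, but the content is identical.
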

\begin{proof}
Fixing a matching $M$ in $B$ of size $k$, there are exactly $m(K_{s-k})$ matchings in $G$ that contain $M$.  So there are $m_k(B) \cdot m(K_{s-k})$ matchings in $G$ that contain a $k$-matching in $B$.  Summing over all possible sizes of matchings in $B$ we count every matching in $G$ exactly once.
\end{proof}

Theorem~\ref{bipartite} states that for a given $k$ there is a graph
that simultaneously minimizes every $m_k(G)$ in $\cB_{\ell,r,e}$. We
will first show that there is a threshold bipartite graph that
minimizes $m_k(G)$ in $\cB_{\ell,r,e}$.  We will then use moves on
the associated Young diagrams to show that the lex bipartite graph
$L_{\ell,r}(e)$ minimizes $m_k(G)$ for all $k$.

\begin{lem}\label{differencegraph}
Suppose $G$ is a bipartite graph with bipartition $(X,Y)$.  Let $u,v \in X$. Then  for every $k\geq 0$ the graph $H:= G_{u \to v}$ has at most as many $k$-matchings as $G$.
\end{lem}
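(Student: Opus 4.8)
The plan is to partition the $k$-matchings of $G$ and of $H:=G_{u\to v}$ according to the sub-matching $F$ consisting of those edges incident to neither $u$ nor $v$, and to show that for each fixed such $F$ the number of ways to complete $F$ to a $k$-matching in $H$ is at most the number of ways in $G$. First I would record the local structure of the compression: writing $A=N_G(u)\setminus N_G(v)$ (which is exactly $N_G(u,\ov v)$, since $G$ bipartite with $u,v$ in one part forces $N_G(u)\subseteq V\setminus\{u,v\}$), one computes $N_H(u)=N_G(u)\cap N_G(v)$ and $N_H(v)=N_G(u)\cup N_G(v)$; in particular $N_H(u)\subseteq N_H(v)$. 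Crucially, $G$ and $H$ agree on every edge not incident to $u$ or $v$, so a set $F$ of pairwise non-incident edges avoiding $u$ and $v$ is a matching in $G$ if and only if it is one in $H$, and the index set of the partition is literally the same for the two graphs.

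Fix such an $F$, set $j=\abs{F}$, and let $U$ be the set of vertices covered by $F$ (so $u,v\notin U$). A $k$-matching $M$ with $M\cap\{\text{edges not incident to }u\text{ or }v\}=F$ is obtained from $F$ by adding $k-j$ further edges, each incident to $u$ or to $v$, whose endpoints outside $\{u,v\}$ avoid $U$ and are distinct. Since at most one added edge meets $u$ and at most one meets $v$, there is nothing to check unless $k-j\in\{0,1,2\}$, and $k-j=0$ is trivial. For $k-j=1$ the number of completions is $\abs{N_G(u)\setminus U}+\abs{N_G(v)\setminus U}$ in $G$ and $\abs{N_H(u)\setminus U}+\abs{N_H(v)\setminus U}$ in $H$; these are equal by the elementary identity $\abs{(P\cap Q)\setminus U}+\abs{(P\cup Q)\setminus U}=\abs{P\setminus U}+\abs{Q\setminus U}$ with $P=N_G(u)$, $Q=N_G(v)$.

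The only case carrying a strict direction is $k-j=2$. Put $a=\abs{N_G(u)\setminus U}$, $b=\abs{N_G(v)\setminus U}$, and $i=\abs{(N_G(u)\cap N_G(v))\setminus U}$. A completion is an edge $ux$ together with an edge $vy$ with $x,y\notin U$ and $x\neq y$; since $u\neq v$ such a pair of edges is determined by the pair $(x,y)$, so the number of completions in $G$ is $ab-i$. In $H$, using $\abs{N_H(u)\setminus U}=i$ and $\abs{N_H(v)\setminus U}=a+b-i$, it is $i(a+b-i)-i$. The inequality $i(a+b-i)-i\le ab-i$ rearranges to $(a-i)(b-i)\ge 0$, which holds since $a-i=\abs{(N_G(u)\setminus N_G(v))\setminus U}\ge 0$ and likewise $b-i\ge 0$. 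Summing the per-$F$ inequalities over all eligible $F$ yields $m_k(H)\le m_k(G)$. I expect the main (really the only) obstacle to be the $k-j=2$ bookkeeping: getting the ``$x\neq y$'' correction term right in both graphs and verifying that the resulting quadratic inequality factors exactly as the product of the two manifestly nonnegative quantities $a-i$ and $b-i$; the rest is routine.
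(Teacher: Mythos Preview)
Your argument is correct and complete. The partition by the sub-matching $F$ of edges missing $\{u,v\}$ is well-defined and identical for $G$ and $H$ because the two graphs agree outside $u$ and $v$; your case analysis on $k-j\in\{0,1,2\}$ is exhaustive, and the $k-j=2$ inequality $(a-i)(b-i)\ge 0$ is exactly right, with the ``$x\neq y$'' correction handled properly in both graphs.

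This is, however, a genuinely different route from the paper's. The paper builds an explicit size-preserving injection $\phi:\M(H)\setminus\M(G)\to\M(G)\setminus\M(H)$ via a replacement rule that swaps the roles of $u$ and $v$ on the relevant edges, and then exhibits a left inverse. Your method is a direct double-count: fix the part of the matching away from $u,v$ and compare the number of local completions. What your approach buys is transparency and brevity---the whole lemma reduces to the single quadratic inequality $i(a+b-i)\le ab$---and it makes visible that equality holds unless some $F$ has both $a>i$ and $b>i$. What the paper's injection buys is portability: essentially the same swap map is reused for the non-bipartite compression (where a possible edge $uv$ would add a case to your count) and is the template for the more elaborate injections in the Young-diagram lemmas later on. Both proofs exploit the identity $N_H(u)=N_G(u)\cap N_G(v)$, $N_H(v)=N_G(u)\cup N_G(v)$; yours uses it arithmetically, the paper's uses it combinatorially.
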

\begin{proof}
We will construct an injection from $\M(H)\setminus \M(G)$ to $\M(G)\setminus \M(H)$ that preserves size.  It then follows that $m_k(H)\leq m_k(G)$ for all $k$.  We define a replacement function $r: E(H) \to E(G)$ by
$$r(e) = \begin{cases}
uy &\text{ if } e=vy \text{ for } y \in N_G(u,\ov{v})\\
vz &\text{ if } e=uz\\
e  &\text{otherwise}
\end{cases}.$$

Given $e$ in the edge set of $H$, we claim that $r(e)$ is an edge in $G$. If $y\in N_G(u,\ov{v})$, then $uy\in E(G)$.  Also, if $uz\in H$ then $z\in N_G(u)\cap N_G(v)$ and so $vz\in E(G)$. Finally, if $e\neq vy$ for $y\in N_G(u\ov{v})$ then $e\in E(G)\cap E(H)$.

Now define $\phi: \M(H)\setminus\M(G) \to \M(G)\setminus\M(H)$ by
$$\phi(M) = \{r(e): e\in M\}$$

Given $M\in \M(H)\setminus\M(G)$ note that $\phi(M) \subseteq E(G)$ since $r(e)\in E(G)$ for all $e\in E(H)$.
We claim that in fact $\phi(M)\in \M(G)\setminus\M(H)$.
For the first case, suppose that $u\in r(e) \cap r(f)$.
Note that if $u\in r(e)$ then $v\in e$ since edges in $H$ containing $u$ are replaced by edges in $G$ containing $v$ instead.
So if $r(e) \cap r(f) = u$ then $e\cap f = v$, a contradiction since $e,f\in M$, a matching.

Now suppose that $e, f\in M$ and $r(e) \cap r(f) = v$. There are two
possible ways for $v$ to be in $r(e)$.  The first is for $u$ to be
in $e$ and the second is for $e=va$ for some $a\notin
N_G(u,\ov{v})$. However, since $M\in \M(H)\setminus\M(G)$ it must be
the case that $vy\in M$ for some $y \in N_G(u,\ov{v})$. Since $M$ is
a matching, $vy$ is the only edge incident to $v$. Therefore, $e$
and $f$ must both contain $u$, a contradiction since $M$ is a
matching.

Finally suppose that $z\in r(e) \cap r(f)$ for some $z\neq u,v$.
Then $z\in e\cap f$, a contradiction.
Thus each vertex has at most one incident edge and $\phi(M)$ is a matching.
Note that $\phi(M)\notin \M(G)$ since $uy\in \phi(M)$ for some $y\in N_G(u,\ov{v})$ and no such edge is in $E(G)$.  So $\phi(M) \in \M(H)\setminus \M(G)$.\\

To finish the proof of the lemma we need only show that $\phi$ is an
injection.   We'll show that $\phi$ has a left inverse defined
similarly to $\phi$.  Consider $r': E(G) \to E(H)$ defined by
$$r'(e)= \begin{cases}
vy &\text{ if } e = uy \text{ for any } y \in N_G(u,\ov{v})\\
uz &\text{ if } e = vz\\
e &\text{otherwise}
\end{cases}.
$$
Define $\phi': \M(G)\setminus \M(H) \to \M(H)\setminus \M(G)$ by $\phi'(M) = \{r'(e): e\in M\}$.  It is straightforward to check that $\phi'(\phi(M))= M$.  Thus $\phi$ has a left inverse and so $\phi$ is injective.
\end{proof}

By Corollary~\ref{bipartitecompress} and Lemma~\ref{differencegraph}, a bipartite graph minimizing the total number of matchings can be found among the threshold bipartite graphs.

\begin{defi}
Say $P$ is an \emph{out-corner} of a Young diagram $B$ if $P \in B$
and there is no box in the diagram to its right or beneath it. Say
$Q$ is an \emph{in-corner} if a box can be added there to create an
out-corner.
\end{defi}

\begin{figure}[h!]\label{corners}
\begin{center}
\begin{tikzpicture}[scale = .65]
   \draw (0,2) grid (6,3);
   \draw (0,1) grid (5,2);
   \draw (0,0) grid (3,1);
   \draw (0,-1) grid (2,0);
   \draw (0,3) rectangle (6,-2);
   \draw (5.5,2.5) node {O};
   \draw (4.5,1.5) node{O};
   \draw (2.5,.5) node{O};
   \draw (1.5,-.5) node{O};
   \draw (5.5, 1.5) node {I};
   \draw (3.5,.5) node {I};
   \draw (2.5,-.5) node {I};
   \draw (.5,-1.5) node {I};
\end{tikzpicture}

\caption{In-corners and out-corners of (6,5,3,2) in a $6\times 5$ frame}
\end{center}
\end{figure}
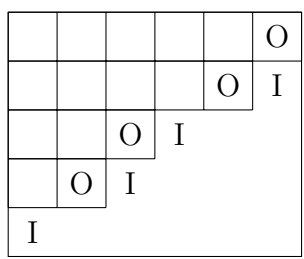

\begin{ex}  In Figure \ref{corners} the in-corners are labeled with $I$ and the out-corners are labeled with $O$.  Notice that the dimensions of the frame matter; for example, in a $4\times 6$ frame $(5,1)$ would not be an in-corner.
\end{ex}

We now define a move that removes a box that is an out-corner and puts a box at an in-corner that is ``further out".  Let $s(P)$ be the sum of the coordinates of $P$.

\begin{lem}\label{oneblock}
Let $B$ be a Young diagram in an $m\times n$ frame.  Suppose $P$ is
an out-corner of $B$ and $P'$ is an in-corner of $B$.  If $s(P)<
s(P')$ then $B':=B+P'-P$ is a Young diagram in an $m\times n$ frame
and has at most as many $k$-matchings as $B$ for all $k\geq 0$.
Moreover, $m(B')<m(B)$.
\end{lem}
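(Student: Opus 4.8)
The plan is to peel off the box $P$, split matchings according to whether they use the box being moved, and then compare the two resulting rook counts by means of the classical factorization theorem for rook polynomials of Ferrers boards.

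First one checks that $B'$ is a Young diagram in the $m\times n$ frame. Since $P=(p,q)$ is an out-corner, $\hat B:=B\setminus\{P\}$ is a Young diagram; since $P'=(a,b)$ is an in-corner of $B$, the cells immediately above and to the left of $P'$ belong to $B$, and they belong to $\hat B$ precisely because $P'$ is neither the cell directly below $P$ nor the cell directly to its right — which is exactly the assertion that $B'=\hat B\cup\{P'\}$ is a Young diagram, and it lies in the frame because $P'$ does. Now split matchings of $B=\hat B\cup\{P\}$ according to whether they contain $P$: this gives, for every $k\ge 0$ (with the convention $m_{-1}=0$),
\[
m_k(B)=m_k(\hat B)+m_{k-1}(\hat B_P),
\]
where $\hat B_P$ denotes the Young diagram obtained from $\hat B$ by deleting the row indexed $p$ and the column indexed $q$; likewise $m_k(B')=m_k(\hat B)+m_{k-1}(\hat B_{P'})$. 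Hence it suffices to prove that $m_j(\hat B_{P'})\le m_j(\hat B_P)$ for all $j\ge 0$, with strict inequality for some $j$.

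Here I would invoke the Goldman--Joichi--White identity (see \cite{Rooknumber}): for a Young diagram $D$ with rows $\lambda_1\ge\cdots\ge\lambda_N$ (padding with zeros), and writing $x^{\underline{t}}=x(x-1)\cdots(x-t+1)$,
\[
\sum_{k\ge 0}m_k(D)\,x^{\underline{N-k}}=\prod_{i=1}^{N}\bigl(x+\lambda_i+i-N\bigr),
\]
so the numbers $m_k(D)$ are determined by the multiset $\{\lambda_i+i-N\}$. Let $\mu$ be the partition of $\hat B$ and put $e_i=\mu_i+i-m$. Because $P$ is an out-corner of $B$ we have $\mu_{p-1}\ge q$ and $\mu_{p+1}\le q-1$, so deleting row $p$ and column $q$ from $\hat B$ decrements exactly rows $1,\dots,p-1$; a direct check then identifies the Goldman--Joichi--White multiset of $\hat B_P$ as $\{e_i:i\ne p\}$, and similarly that of $\hat B_{P'}$ as $\{e_i:i\ne a\}$. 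These two multisets coincide except that the first contains $e_a$ where the second contains $e_p$. Writing $h(x)=\prod_{i\ne p,a}(x+e_i)$ and subtracting the two identities,
\[
\sum_{j\ge 0}\bigl(m_j(\hat B_P)-m_j(\hat B_{P'})\bigr)\,x^{\underline{(m-1)-j}}=(e_a-e_p)\,h(x)=\bigl(s(P')-s(P)\bigr)\,h(x),
\]
because $e_a-e_p=(\mu_a+a)-(\mu_p+p)=(a+b)-(p+q)=s(P')-s(P)>0$.

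The remaining point — which I expect to be the main obstacle — is to show that $h(x)$ has non-negative coefficients in the falling-factorial basis $\{x^{\underline t}\}$. I would prove this by exhibiting a Young diagram $D$ in an $(m-2)\times n$ frame whose Goldman--Joichi--White multiset is $\{e_i:i\ne p,a\}$, so that $h(x)=\sum_k m_k(D)\,x^{\underline{(m-2)-k}}$ and the coefficients are then $m_k(D)\ge 0$. Such a $D$ is obtained by deleting one more row and column from $\hat B_P$: the value $e_a$ is the Goldman--Joichi--White value of a row of $\hat B_P$ whose immediately preceding row is strictly longer (this is where one uses that $P'$ lies beyond $P$, ruling out the exceptional case $a=p+1$), and deleting that row together with the column just to its right removes precisely $e_a$ from the multiset. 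Combining everything, $m_k(B)-m_k(B')=m_{k-1}(\hat B_P)-m_{k-1}(\hat B_{P'})=\bigl(s(P')-s(P)\bigr)\,m_{k-2}(D)\ge 0$ for every $k$, so $m_k(B')\le m_k(B)$; and taking $k=2$ gives $m_2(B)-m_2(B')=s(P')-s(P)\ge 1$, so that $m(B')=\sum_k m_k(B')<\sum_k m_k(B)=m(B)$.
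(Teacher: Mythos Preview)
Your proof is correct in substance and takes a genuinely different route from the paper's. The paper works inside $B^+=B+P'$ and builds an explicit size-preserving injection from matchings of $B^+$ containing $P'$ (but not $P$) to those containing $P$ (but not $P'$); this requires splitting into two cases according to whether the matching meets a strip $S$ of cells in column $q$ between $P$ and $P'$, and in the second case invoking an auxiliary injection between unoccupied rows and columns, with a fair amount of bookkeeping. You instead peel off $P$ and $P'$ symmetrically, apply the Goldman--Joichi--White factorization to $\hat B_P$ and $\hat B_{P'}$, and reduce everything to the single algebraic fact that $\prod_{i\ne p,a}(x+e_i)$ has non-negative falling-factorial coefficients, which you then certify by exhibiting a Young diagram $D$ realizing that product. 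Your route is shorter and yields the exact identity $m_k(B)-m_k(B')=(s(P')-s(P))\,m_{k-2}(D)$, something the bijective argument does not give; the paper's argument, by contrast, is entirely self-contained and does not need the GJW theorem as input.

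One caveat worth recording: your opening claim that ``$P'$ is neither the cell directly below $P$ nor the cell directly to its right'' is asserted but not proved, and it can in fact fail --- in the paper's own running example $(6,5,3,2)$, the cell $(2,6)$ is an in-corner, $(1,6)$ is an out-corner, and $s(1,6)<s(2,6)$. In these degenerate configurations $B'$ is not a Young diagram at all (rows $p$ and $p+1$ would have lengths $q-1$ and $q$), so the lemma as stated is already problematic there; correspondingly, your identification of the GJW multiset of $\hat B_{P'}$ as $\{e_i:i\ne a\}$, and the equality $e_a-e_p=s(P')-s(P)$, both break down precisely in these cases. This is a defect of the lemma's statement rather than of your method: once one excludes $P'\in\{(p,q+1),(p+1,q)\}$ --- equivalently, once one insists that $B'$ actually be a Young diagram --- both your argument and the paper's go through cleanly.
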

\begin{proof}
Suppose $P=(i,j)$ and $P'=(i',j')$. The hypothesis states that $i+j<i'+j'$.  Define $B^+ = B+P' = B'+P$.  Note that $B^+$ is a Young diagram since $P'$ is an in-corner.  Showing that $B'$ has fewer matchings than $B$ is equivalent to showing that there are fewer matchings in $B^+$ that contain $P'$ and not $P$ than matchings in $B^+$ that contain $P$ and not $P'$. To do this we will define an injection from the collection of $k$-matchings of $B^+$ that contain $P'$ to the collection of $k$-matchings of $B^+$ that contain $P$.

Define $S:=\setof{(a,j)}{i'<a<i}$ and $T:=\setof{(i',b)}{ j<b<j'}$.  The injection will be defined in two parts: firstly on those matchings of $B$ that don't intersect $S$ and secondly on those that do.

\begin{center}
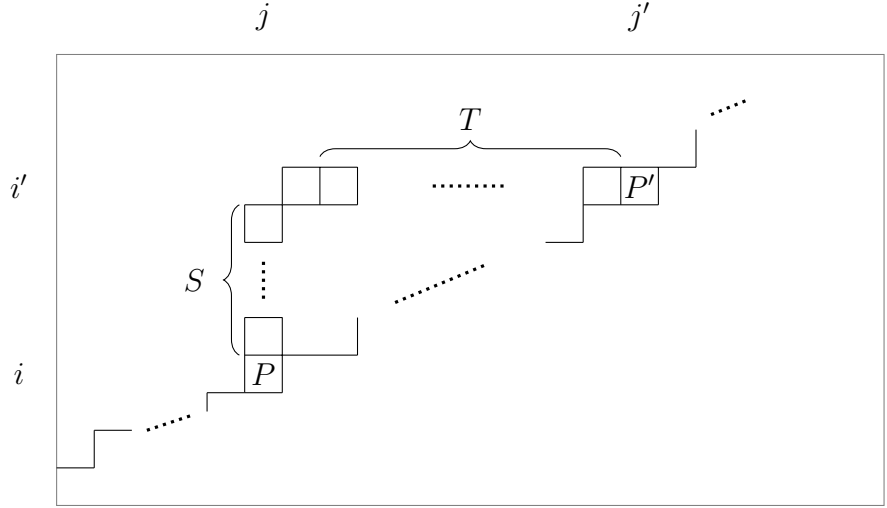
\begin{figure}[!h]
\begin{tikzpicture}
\draw[color=gray] (-1,0) rectangle (10,6);

\draw (-1.5, 1.75) node {$i$};
\draw (1.75 , 6.5) node {$j$};
\draw (-1.5, 4.25) node {$i'$};
\draw (6.75, 6.5) node {$j'$};

\draw[step=.5] (2,4) grid (3,4.5);
\draw (2,4) -- (3,4);
\draw (2,4) -- (2,4.5);
\draw[dotted,very thick] (4,4.25) -- (5,4.25);
\draw[step=.5] (6,4) grid (7,4.5);
\draw (6,4) -- (6,4.5);
\draw (6,4) -- (7,4);
\draw [decorate,decoration={brace,amplitude=6pt},xshift=0pt,yshift=4pt] (2.5,4.5) -- (6.5,4.5) node [black,midway,yshift=.5cm]{$T$};
\draw (6.75,4.25) node {$P'$};

\draw[step=.5] (1.5,3.5) grid (2,4);
\draw (1.5, 3.5) -- (1.5,4);
\draw (1.5,3.5) -- (2,3.5);
\draw[dotted,very thick] (1.75,2.75) -- (1.75,3.25);
\draw[step=.5] (1.5,1.5) grid (2,2.5);
\draw (1.5,1.5) -- (1.5,2.5);
\draw (1.5,1.5) -- (2,1.5);
\draw [decorate,decoration={brace,amplitude=6pt},xshift=-2pt,yshift=0pt] (1.5,2) -- (1.5,4) node [black,midway,xshift=-0.6cm]{$S$};
\draw (1.75,1.75) node {$P$};

\draw (-1,.5) -- (-.5,.5);
\draw (-.5,.5) -- (-.5,1);
\draw (-.5,1) -- (0,1);
\draw[dotted,very thick] (0.2,1) -- (.8,1.2);
\draw (1,1.5) -- (1.5,1.5);
\draw (1,1.25) -- (1,1.5);

\draw (2,2) -- (3,2);
\draw (3,2) -- (3,2.5);
\draw[dotted, very thick] (3.5,2.7) -- (4.7,3.2);
\draw (6,3.5) -- (6,4);
\draw (5.5,3.5) -- (6,3.5);

\draw (7,4.5) -- (7.5,4.5);
\draw (7.5,4.5) -- (7.5,5);
\draw[dotted, very thick] (7.7, 5.2) -- (8.2,5.4);

\end{tikzpicture}
\caption{The Young diagram $B^+$.}\label{boardB+}
\end{figure}
\end{center}

In Figure \ref{boardB+} we have drawn $P'$ up and to the right of $P$.    It is also possible that $P'$ is down and to the left of $P$.  In this case the same proof will work by using the transpose of $B^+$.

Let
 $$A_{\ov{S}}: = \{M\in M(B^+): P'\in M, P \notin M, S\cap M=\emptyset\},$$
 $$A_{\ov{T}}: = \{M\in M(B^+): P\in M, P'\notin M T\cap M=\emptyset\},$$
 $$A_{S}: = \{M\in M(B^+): P'\in M, P\notin M S\cap M\neq\emptyset\},$$
 $$A_{T}: = \{M\in M(B^+): P\in M, P'\notin M, T\cap M\neq\emptyset\},$$
We first define a bijection between $A_{\ov{S}}$ and $A_{\ov{T}}$ and then we define an injection from $A_S$ to $A_T$.  For each case, we will define a replacement function $r_i$ on the blocks of $B^+$ and then an injection $f_i$ on the appropriate matchings.  In the end we will have
$$A_{\ov{S}}\overset{f_1}{\longleftrightarrow}A_{\ov{T}}$$
$$A_{S} \overset{f_2}{\hookrightarrow} A_T.$$
Since each map will send matchings of size $k$ to matchings of size $k$, we conclude $m_k(B')\leq m_k(B)$ for all $k \geq 0$.


\emph{Case 1:}   Suppose $M\in A_{\ov{S}}$, that is, $M$ is a matching in $B^+$ such that $P'\in M$, $P\notin M$, and $S\cap M =\emptyset$.  Define $r_1: B^+\to B^+$ by
$$r_1(a,b):=\begin{cases}
(i,j) &\text{ if } a = i', b = j'\\
(i',b) &\text{ if } a = i\\
(a,j') &\text{ if } b = j\\
(a,b)  &\text{ otherwise}
\end{cases}.$$

We can think of $r_1$ as sending the rook at $P'$ to $P$, and then projecting rooks in row $i$ and column $j$ onto row $i'$ and column $j'$ respectively.
We claim $r_1(a,b)\in B^+$ for all $(a,b)\in B^+$.
Since $P = (i,j)$ is an out-corner of $B$, if $(i,b)\in B^+$ then $b\leq j<j'$.
Because $B^+$ is a Young diagram and $P' = (i',j')\in B^+$, we have $r_1(i,b) = (i',b)\in B^+$.
In this case $M\cap S = \emptyset$ so a rook of the form $(a,j)$ with $a\neq i$ must have $a<i'$.
Since $B^+$ is a Young diagram and $P' = (i',j')\in B^+$, we know $r_1(a,j) = (a,j')$ is in $B^+$.
Therefore, $r_1(a,b)\in B^+$ for all $(a,b)\in B^+$.

Define $f_1: A_{\ov{S}} \to A_{\ov{T}}$ by
$$f_1(M) = \{r_1(a,b): (a,b)\in M\}.$$
 First, we show that given $M\in A_{\ov{S}}$, $f_1(M)$ is in fact in $A_{\ov{T}}$.  Sending the rook in $P'$ to $P$ causes conflicts only for rooks in row $i$ and column $j$.   We have solved the problem of a rook in row $i$ since we changed the row of this rook to $i'$.  Note that row $i'$ is otherwise unoccupied since $P'\in M$ and $M$ is a matching.  Similarly, we have solved the problem of a rook in column $j$ since we changed the column of this rook to $j'$. This column is otherwise unoccupied since $M$ is a matching and $P'\in M$.  Thus, $f_1(M)$ is a matching in $B$.
 Moreover, $f_1(M)$ has no rooks in $T$ and so $f_1(M) \in A_{\ov{T}}$.

In addition, $f_1$ is injective.  Define $r_1': B^+ \to B^+$ by
$$r_1'(a,b):=\begin{cases}
(i',j') &\text{ if } (a,b) = (i,j)\\
(i,b) &\text{ if } a = i', b\neq j\\
(a,j) &\text{ if } a\neq i, b = j'\\
(a,b)  &\text{otherwise}
\end{cases}$$
and define $f_1': A_{\ov{T}} \to A_{\ov{S}}$ by
$$f_1'(M) = \{r_1'(a,b): (a,b)\in M\}.$$
 It is straightforward to check that $f_1'(f_1(M))= M$ for all $M\in A_{\ov{S}}$ and $f_1(f_1'(M))= M$ for all $M\in A_{\ov{T}}$.  Thus, there is a bijection between the matchings in $B^+$ with $P'$ and no rooks in $S$ and matchings in $B^+$ with $P$ and no rooks in $T$.

\emph{Case 2:}   Suppose $M\in A_S$.
That is, $M$ is a matching in $B^+$ with a rook in $P'$ and a rook in $S$.
Define $E:= \{(a,b): (a,b)\in B^+, a>i', b>j\}$.
In Figure \ref{boardB+} this is collection of blocks that are both to the right of $P$ and below $P'$.
Let $S^*(M)\subset \{i'+1,\dots,i-1\}$ be the rows of blocks in $S$ that do not share a row with any rooks of $M$ that are in $E$.
Similarly, let $T^*(M)\subset\{j+1,\dots,j'-1\}$ be the columns of blocks in $T$ that do not share a column with any rooks of $M$ that are in $E$.
We claim that $\abs{S^*(M)}<\abs{T^*(M)}$.  Note that $\abs{S} = i-i'-2$ and $\abs{T} = j'-j-2$.  Since $i+j< i'+j'$ we have $i-i'< j-j'$ and so $\abs{S}< \abs{T}$. Letting $a$ be the number of rooks in $E$, then $\abs{S^*(M)} = \abs{S} - a$ and $\abs{T^*(M)} = \abs{T} - a$.  So $\abs{S^*(M)}< \abs{T^*(M)}$.
Thus, there is an injection $s: S^*(M)\to T^*(M)$.
We fix some arbitrary injection $s$ and let $(a_0,j)$ be the location of the rook in $S$. Define  $r_2: B^+ \to B^+$ by
$$r_2(a,b):=
\begin{cases}
(i,j) &\text{ if } (a,b) = (i',j')\\
(i',s(a))  &\text{ if } (a,b)\in S\\
(a_0,b) &\text{ if } a=i\\
(a,j') &\text{ if } b=s(a_0)\\
(a,b) &\text{otherwise}
\end{cases}.$$

In Figure \ref{r2sketch} the gray boxes are the images of the black boxes under the map $r_2$.  We can think of $r_2$ as sending the rook in $P'$ to $P$ (arrow 1), sending the rook in $S$ to a place in $T$ via $s$ (arrow 2), and then projecting rooks in conflicting rows and columns to rows and columns that are known to be unoccupied (arrows 3 and 4).

\begin{center}
\begin{figure}[!h]
\begin{tikzpicture}[scale = 2]

\coordinate (P) at (0,-1);
\coordinate (P') at (2,0);
\coordinate (S) at (0,-.5);
\coordinate (S') at (1,0);
\coordinate (V) at (-1,-1);
\coordinate (V') at (-1,-.5);
\coordinate (H) at (1,.5);
\coordinate (H') at (2,.5);

\draw (-.1,-1.3) node {$P$};
\draw[below right] (P') node {$P'$};
\draw[left] (-2,-.1) node {$i'$};
\draw[left] (-2,-1.1) node {$i$};

\draw [very thick, -stealth] (1.9,-.2) to (0,-1.15);
\draw (1,-.75) node {$1$};
\draw [very thick, -stealth] (0,-.35) to (.7,-.2);
\draw (.35, -.4) node {$2$};
\draw [very thick, -stealth] (-1.3,-1) to (-1.3,-.4);
\draw (-1.4,-.7) node{$3$};
\fill[color=black] (.6,.6) rectangle (.8,.8);
\draw [very thick, -stealth] (.8,.7) to (1.8,.7);
\draw (1.3,.8) node {$4$};

\draw[step=.2] (-.2,-1) grid (0,-.8);
\draw[dotted,very thick] (-.1,-.7) -- (-.1,-.5);
\fill[color=black] (-.2,-.4) rectangle (0,-.2);
\draw[step=.2] (-.2,-.4) grid (0,0);
\draw[step=.2] (-.2,0) grid (0,.4);
\draw[dotted,very thick] (-.1,.5) -- (-.1,.7);
\draw[step=.2] (-.2,.8) grid (0,1);
\draw (-.2,.8) -- (0,.8);

\draw[step=.2] (1.8,0) grid (2,.2);
\draw[dotted,very thick] (1.9,.3) --(1.9,.5);
\fill[color=gray] (1.8,.6) rectangle (2,.8);
\draw[step=.2]  (1.8,.6) grid (2,1);
\draw (1.8,.6) -- (1.8,1);
\draw (1.8,.6) -- (2,.6);
\draw (1.8,0) -- (1.8,.2);

\draw[step=.2] (-2,-.2) grid (-1.2,0);
\draw[dotted, very thick] (-1.1,-.1) -- (-.9,-.1);
\draw[step=.2] (-.8,-.2) grid (0,0);
\fill[color=gray] (.6,-.2) rectangle (.8,0);
\draw[step=.2] (0,-.2) grid (.8,0);
\draw[dotted,very thick] (1,-.1) -- (1.2,-.1);
\fill[color=black] (1.8,-.2) rectangle (2,0);
\draw[step=.2] (1.4,-.2) grid (2,0);
\draw (1.4,-.2) -- (1.4,0);
\fill[color=gray] (-1.4,-.4) rectangle (-1.2 ,-.2);
\draw (-1.4,-.4) rectangle (-1.2 ,-.2);

\fill[color=black] (-1.4,-1.2) rectangle (-1.2,-1);
\draw[step=.2] (-2,-1.2) grid (-1.2,-1);
\draw[dotted,very thick]  (-1,-1.1) -- (-.8,-1.1);
\fill[color=gray] (-.2,-1.2) rectangle (0,-1);
\draw[step=.2] (-.6,-1.2) grid (0,-1);

\end{tikzpicture}
\caption{A sketch of the map $r_2$} \label{r2sketch}
\end{figure}
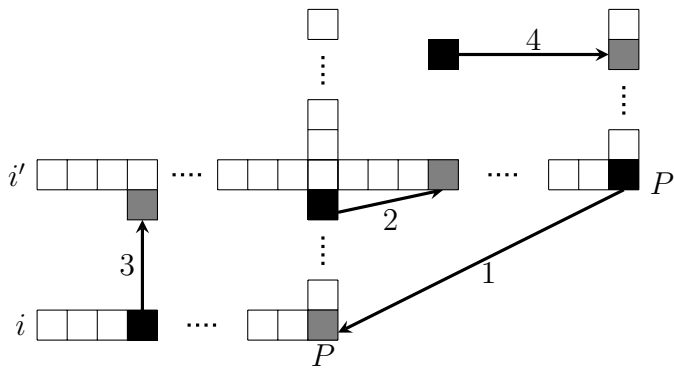
\end{center}

Again we need to show that $r_2(a,b) \in B^+$ for all $(a,b)\in B^+$.  If $(a,b)\in S$ then $(i',s(a))\in B^+$ as $(i',s(a))\in T$.  If the rook $(a,b)$ is in row $i$ then $r_2(a,b) = (a_0,b)\in B^+$ as $a_0 <i$ and $(i,b)\in B^+$.  Finally, if $(a,b) = (a,s(a_0))$, then $r_2(a,b) = (a,j') \in B$ since $a<i'$ and $(i',j')$ is in $B^+$.

For $M\in A_S$, define
$$f_2(M):=\{r_2(a,b): (a,b)\in M\}.$$
We claim that if $M$ is a matching in $A_S$ then $f_2(M)$ is a matching in $A_T$.
First we will show that no two rooks are in the same row.  There are only three rooks that change rows when we apply $r_2$.
These rooks originally have rows $i'$, $i$ and $a_0$.
After applying $r_2$ these rooks are in rows $i$, $a_0$ and $i'$ respectively.
Thus the rooks of $f_2(M)$ occupy the same collection of rows as those of $M$.
Now we must show that no two rooks occupy the same column.
Similarly, there are only three rooks that change columns.
These rooks originally occupy columns $j',j,$ and $s(a_0)$.  After applying $r_2$, these rooks occupy $j,s(a_0),$ and $j'$ respectively.  So the rooks of $f_2(M)$ occupy the same collection of columns as those of $M$ and so no two rooks are in the same column.  Finally, $T\cap f_2(M) \neq \emptyset$ since the rook in $S$ got sent to a rook in $T$. Thus $f_2(M)$ is a matching in $A_T$.

Using $f_1$ and $f_2$ we know where to send all matchings in $B^+$ that have a rook in $P'$ and not $P$.  Moreover, the images of $f_1$ and $f_2$ are disjoint and thus there is an injection from matchings in $B^+$ with a rook in $P'$ and not $P$ to matchings in $B^+$ that have a rook in $P$ and not $P'$.  Since this injection preserves the size of the matching, $m_k(B') \leq m_k(B)$.

It remains to prove that $B'$ has strictly fewer matchings than $B$.  Suppose that there are no rooks of $M$ that are in $E$ and fix an injection $s: S^*(M)\to T^*(M)$  Since $\abs{S^*(M)}<\abs{T^*(M)}$ there exists $Q\in T^*(M)$ such that $Q\neq s(R)$ for any $R\in S^*(M)$.  Then $\{Q,P\}\in A_T$ and there does not exist $M\in A_S$ such that $f_2(M) = \{Q,P\}$.  Thus, $m(B')<m(B)$.
\end{proof}

\begin{defi}
Given a Young diagram $B$ with an out-corner $P$ and an in-corner $P'$ with $s(P)<s(P')$ we call the move that results in $B-P+P'$ an \emph{out-block move}.
\end{defi}

There are examples of Young diagrams that are not the Young diagrams associated to $L_{\ell,r}(e)$ that have no out-block moves.
For example, see Figure \ref{YDNOB}.  For this reason we are forced to introduce an additional move.
It is clear that taking the transpose of a Young diagram preserves $m_k(B)$ for all $k$.
The following definition describes a way that we can transpose a piece of a Young diagram.

\begin{defi}
Let $B$ be a Young diagram in an $\ell\times r$ frame and let $(i,j)\in B$.  Define the \emph{transpose of $B$ at $P = (i,j)$} to be the diagram
$$B_{P}^* := \{(a,b)\in B: a<i \text{ or }  b<j\} \cup  \{ (a,b)^*: (a,b)\in B, a\geq i, \text{ and } b\geq j\}$$
where $(a,b)^* = (b-j+i,a-i+j)$.  Call transposing at $P$ {\em legal} if $B_{P}^*$ is a Young diagram in an $\ell\times r$ frame.
\end{defi}

Note that the definition of $(a,b)^*$ depends on $(i,j)$, the place we are transposing, but we suppress this in the notation.

\begin{ex} Figures \ref{YDNOB} and \ref{Transposed} are two Young diagrams in a $4\times 5$ frame.  To get the Young diagram in Figure \ref{Transposed} from the Young diagram in Figure \ref{YDNOB} we transpose at $P= (1,2)$

\begin{figure}[h!]
\begin{multicols}{2}
\begin{center}
\begin{tikzpicture}[scale = .5]
  \draw[color=gray] (0,0) rectangle (5,4);
\draw (0,0) grid (3,4);
 \fill [color=black] (1.5,3.5) circle (3pt);
\end{tikzpicture}
\caption{The Young diagram $B$ of (3,3,3,3)}\label{YDNOB}

\begin{tikzpicture}[scale = .5]
\draw[color=gray] (0,0) rectangle (5,4);
\draw (0,2) grid (5,4);
\draw (0,0) grid (1,2);
\end{tikzpicture}
\caption{The Young diagram of $B^*_{(1,2)} = (5,5,1,1)$}\label{Transposed}
\end{center}
\end{multicols}
\end{figure}

\end{ex}

We will now prove that legally transposing at $P\in B$ preserves $m_k$ for all $k\geq 0$.

\begin{lem}\label{transpose}
Let $B$ be a Young diagram and let $P\in B$.  Performing a legal transpose at $P$ preserves the number of matchings of all sizes.
\end{lem}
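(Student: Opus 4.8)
The plan is to prove the lemma by constructing, for every $k$, a size-preserving bijection between $\M_k(B)$ and $\M_k(B^*_P)$. Write $P=(i,j)$. Unwinding the definition, $B^*_P$ is obtained from $B$ by keeping every box $(a,b)$ with $a<i$ or $b<j$ and replacing the set $C:=\setof{(a,b)\in B}{a\ge i,\ b\ge j}$ --- which is itself a Young-diagram shape with its corner at $P$ --- by its reflection $\sigma(C)$ in the diagonal through $P$, where $\sigma(a,b)=(b-j+i,\,a-i+j)$; that is, $B^*_P=(B\setminus C)\cup\sigma(C)$. Since $\sigma$ carries rook placements on $C$ bijectively to rook placements on $\sigma(C)$, we get $m_k(C)=m_k(\sigma(C))$ for all $k$ for free; the content of the lemma is that this persists once $C$ is glued into the rest of $B$.

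First I would record the local geometry at $P$. Partition the boxes of $B$ (and of $B^*_P$) into four regions: $\mathit{NW}=\setof{(a,b)}{a<i,\ b<j}$, which since $(i,j)\in B$ is a full $(i-1)\times(j-1)$ rectangle; $\mathit{NE}=\setof{(a,b)\in B}{a<i,\ b\ge j}$; $\mathit{SW}=\setof{(a,b)\in B}{a\ge i,\ b<j}$; and $C$ (respectively $\sigma(C)$). The regions $\mathit{NW},\mathit{NE},\mathit{SW}$ are literally the same in $B$ and $B^*_P$. In a matching $M$, an $\mathit{NE}$-rook and a $C$-rook can collide only in a column $\ge j$, an $\mathit{SW}$-rook and a $C$-rook only in a row $\ge i$, an $\mathit{NW}$-rook meets $\mathit{NE}$-rooks only through rows $<i$ and $\mathit{SW}$-rooks only through columns $<j$, and no other pairs of regions interact; in particular the ``row'' interactions live among rows $\ge i$ and the ``column'' interactions among columns $\ge j$, two disjoint ranges.

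The bijection $\Phi$ acts on $M\in\M_k(B)$ as follows: it fixes every $\mathit{NW}$-rook; it reflects each $C$-rook $(a,b)$ to $\sigma(a,b)\in\sigma(C)$; and it then repairs the collisions this creates. A reflected $C$-rook may land in a row $\ge i$ already used by an $\mathit{SW}$-rook, or in a column $\ge j$ already used by an $\mathit{NE}$-rook; the offending $\mathit{SW}$-rooks are cascaded downward into rows $\ge i$ that the reflection has freed (changing only their rows) and the offending $\mathit{NE}$-rooks cascaded to the right into freed columns $\ge j$ (changing only their columns), exactly in the manner of the replacement maps $r_1,r_2$ in the proof of Lemma~\ref{oneblock}. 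Because the row-repairs take place entirely among rows $\ge i$ and the column-repairs entirely among columns $\ge j$, the two cascades do not interfere with each other or with the fixed $\mathit{NW}$-rooks. I would then define $\Phi^{-1}$ by the mirror recipe --- reflect $\sigma(C)$-rooks back by $\sigma^{-1}$ and run the two cascades in reverse --- and verify $\Phi^{-1}\circ\Phi=\mathrm{id}$. Since every step preserves the number of rooks, $m_k(B)=m_k(B^*_P)$ for all $k$, and summing over $k$ gives the statement for $m$.

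The step I expect to be the main obstacle is the collision repair and its reversibility. One must show that after reflecting the $C$-rooks there are always enough freed rows (respectively columns) to absorb the displaced $\mathit{SW}$-rooks (respectively $\mathit{NE}$-rooks) and that each displaced rook lands on a genuine box of $B^*_P$; as in Case~2 of Lemma~\ref{oneblock} this is a counting comparison between the rows (columns) occupied before and after the reflection, and it is here --- rather than in the bare transposability of $C$ --- that legality of the transpose (the hypothesis that $\sigma(C)$ still fits underneath $\mathit{NE}$ and beside $\mathit{SW}$ inside the $\ell\times r$ frame) is used. (One could instead deduce $m_k(B)=m_k(B^*_P)$ from the Goldman--Joichi--White factorization of the rook polynomial, since $B$ and $B^*_P$ turn out to be rook-equivalent Ferrers boards, but a bijective argument is more in keeping with the rest of the paper.)
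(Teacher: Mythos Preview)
Your plan is essentially the paper's proof: the same four-region partition (your $\mathit{NW},\mathit{NE},\mathit{SW},C$ are the paper's unlabeled corner, $U$, $L$, $T$), the same reflection of the $T$-rooks, and the same repair of the induced row/column conflicts by re-indexing the rooks in $L$ and $U$.  The one substantive difference is in how the repair is organized.  You propose to move only the \emph{offending} $\mathit{SW}$- and $\mathit{NE}$-rooks into rows/columns that the reflection has freed; the paper instead fixes $M_T:=M\cap T$, notes that the sets $U_1,U_2$ of column indices in the band $\{j,\dots,j+x-1\}$ unoccupied by $M_T$ and by $M_T^*$ both have size $x-|M_T|$, picks an arbitrary bijection $u:U_1\to U_2$ (and similarly $l:L_1\to L_2$ for rows), and then relocates \emph{every} $U$-rook via $u$ and every $L$-rook via $l$.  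This buys invertibility for free ($u^{-1},l^{-1}$ give the inverse), whereas your ``cascade only the colliders'' version needs an extra sentence to explain why one can tell, after the fact, which rooks were moved.  Your diagnosis of where legality enters is exactly right: it is what guarantees that the $U$-band over columns $j,\dots,j+x-1$ is a full $(i-1)\times x$ rectangle (and dually for $L$), so that the relocated rooks still sit on boxes of $B^*_P$.  The Goldman--Joichi--White remark is also correct and would give a shorter, non-bijective proof.
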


\begin{proof}
Suppose that the sub-board to be transposed has dimensions $x\times y$.  Write $P = (i,j)$.  Without loss of generality, let $\max\{x,y\}=x$.  We single out the following pieces of $B$ as sketched in Figure \ref{SixPieces} :
\begin{align*}
    T &= \setof{(a,b)}{(a,b)\in B, a\geq i, b\geq j}   &&\text{the \bf{T}ransposed portion}\\
    U &= \setof{(a,b)}{ a<i,j\leq b \leq j+x} &&\text{the portion \bf{U}p from $T$}\\
    L &= \setof{(a,b)}{i\leq a \leq i+x, b<j} &&\text{the portion to the \bf{L}eft of $T$}\\
\end{align*}

\begin{center}
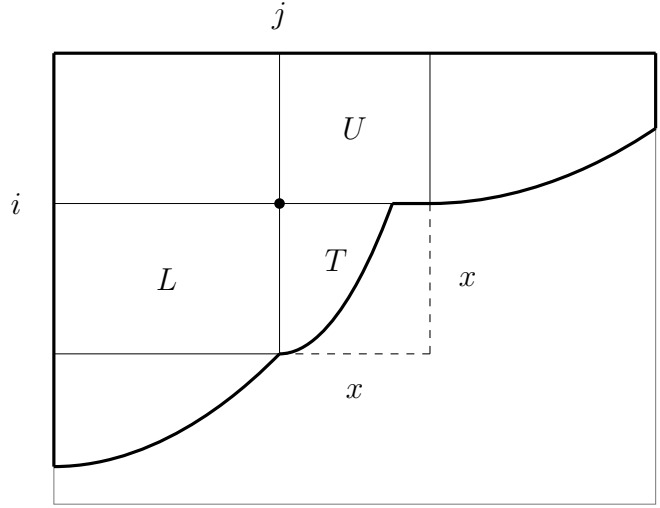
\begin{figure}
\begin{tikzpicture}
\draw[color = gray] (0,0) rectangle (8,6);

\draw[very thick] (0,.5) parabola (3,2);
\draw[very thick] (3,2) parabola (4.5,4);
\draw[very thick] (5,4) parabola (8,5);
\draw[very thick] (0,.5) -- (0,6);
\draw[very thick] (0,6) -- (8,6);
\draw[very thick] (8,6) -- (8,5);
\draw[very thick] (4.5,4) -- (5,4);

\draw[-,very thin] (0,2) -- (3,2);
\draw[-,very thin] (3,2) -- (3,6);
\draw[-,very thin] (0,4) -- (5,4);
\draw[-,very thin] (5,4) -- (5,6);

\draw[dashed] (3,2) -- (5,2);
\draw (4,1.5) node {$x$};
\draw[dashed] (5,2) -- (5,4);
\draw (5.5,3) node {$x$};

\draw (4,5) node {$U$};
\draw (1.5,3) node {$L$};
\draw (3.75,3.25) node {$T$};
\draw (-.5,4) node {$i$};
\draw (3,6.5) node {$j$};

\fill (3,4) circle (2pt);

\end{tikzpicture}
\caption{Pieces of $B$.}\label{SixPieces}
\end{figure}
\end{center}

Fix a matching in $T$, call it $M_T$.  Define $T^*:=\{(a,b)^*: (a,b)\in T\}$ and let $M_T^*$ be the matching in $T^*$ obtained by transposing the location of each of the rooks. We will define an injection from matchings in $B$ that contain $M_T$ to matchings in $B^*_P$ that contain $M_T^*$.  Doing this for every matching in $T$ will show that there are at most as many matchings in $B$ as in $B_{P}^*$.  A similarly defined injection will work to conclude that $B_{P}^*$ has at most as many matchings as $B$.  Since these injections will preserve the size of each matching we will conclude that $m_k(B) = m_k(B_P^*)$.

Let $U_1\subseteq \{j,\dots, j+r\}$ be the set of column indices between $j$ and $j+r$ that are unoccupied by a rook in $M_T$ and similarly let $U_2\subseteq\{j,\dots, j+x\}$ be the set of column indices between $j$ and $j+x$ that are unoccupied by a rook in $M_T^*$.
If $\abs{M_T}=t$ then $\abs{U_1} = \abs{U_2} = x+1 - t$.
Thus, there is a bijection $u: U_1\to U_2$.

Similarly, let $L_1\subseteq \{i,\dots,i+x\}$ be the set of row indices between $i$ and $i+x$ that are not occupied by a rook in $M_T$ and let $L_2\subseteq\{i,\dots, i+x\}$ be the set of row indices between $i$ and $i+x$ of rows that are not occupied by a rook in $M_T^*$.
Again, $\abs{L_1}=\abs{L_2} = x+1 - t$ where $t$ is the size of $M_T$ and hence there is a bijection $l: L_1 \to L_2$.

 Define $r: B \to B^*_P$ by
$$r(a,b):=
\begin{cases}
(a,b)^* &\text{ if } (a,b) \in T\\
(l(a),b)  &\text{ if } (a,b)\in L, a \in L_1\\
(a,u(b)) &\text{ if } (a,b) \in U, b \in U_2\\
(a,b) &\text{ otherwise}
\end{cases}.$$

Define $f$ from matchings in $B$ containing $M_T$ to matchings in $B^*_P$ containing $M_T^*$ by
$$f(M) = \{ r(a,b): (a,b) \in M\}.$$
  First we note that given $M\in \M(B)$ we have $f(M) \in \M(B^*_P)$.  We know $f(M) \subset B^*_P$ since $r(a,b)\in B^*_P$ for all $(a,b) \in B$.  For each rook, $(a,b)\in M_T$ we send $(a,b)$ to $(a,b)^*$.  Conflicts are only caused in rows $i,i+1,\dots, i+r$ and columns $j,j+1,\dots j+r$.  These conflicts are resolved using the injections $l$ and $u$ which send all rooks in $L$ and $U$ to rows and columns unoccupied by $M_T^*$.  This causes no additional conflicts since no additional rows or columns are changed.  Thus $f(M) \in \M(B^*_P)$.  Moreover, for a matching $M\in \M(B)$ containing $M_T$, $f(M)$ contains $M_T^*$.

We claim that $f$ is a bijection.  Define
$$r'(a,b):=
\begin{cases}
(a,b)^* &\text{ if } (a,b) \in T\\
(l^{-1}(a),b) &\text{ if } (a,b)\in L, a \in L_2\\
(a,u^{-1}(b)) &\text{ if } (a,b) \in U, b \in U_2\\
(a,b) &\text{ otherwise}
\end{cases}.$$
Define $f'$ from matchings in $B^*_P$ containing $M_T$ to matchings in $B$ containing $M_{T}^*$ by
$$f'(M) = \{r'(a,b): (a,b)\in M\}.$$
It is straightforward to check that $f'$ is actually the inverse of $f$.  Thus, $f$ is a bijection and $m_k(B) = m_k(B^*)$.
\end{proof}

The next lemma shows how we piece together the out-block move and the transpose move.  First we define the lex order on Young diagrams.

\begin{defi}
To define the lex order on Young diagrams, we first define an ordering on ordered pairs.  Say $(a,b)\lesssim(c,d)$ if $a<c$ or $a=c$ and $b<d$.  Then the \emph{lex order} $<_L$ on Young diagrams is defined by $B<_L B'$ if and only if $\min_{\lesssim}(B\Delta B')\in B$.
\end{defi}

Note that by this definition, $L_{\ell,r}(e)$ is least among all Young diagrams in $\ell\times r$ frames.  The next lemma states that if we can't find an out-block move we can find a legal transpose that moves a board to one earlier in lex order.

\begin{lem}\label{transposeupinlex}
Consider a Young diagram $B$ in an $\ell\times r$ frame where $\ell\leq r$ that has no out-block moves and is not the Young diagram of $L_{\ell,r}(e)$. There exists $P\in B$ such that the transpose at $P$ is legal and $B^*_P<_L B$.
\end{lem}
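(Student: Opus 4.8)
The plan is to compare $B$ cell‑by‑cell with the $<_L$‑least diagram $L_{\ell,r}(e)$ and to transpose a suitable block so as to lengthen the first row in which the two disagree. Since $L_{\ell,r}(e)$ is $<_L$‑least among Young diagrams in an $\ell\times r$ frame and $B\neq L_{\ell,r}(e)$, the cell $(i_0,j_0):=\min_{\lesssim}\bigl(B\,\Delta\,L_{\ell,r}(e)\bigr)$ lies in $L_{\ell,r}(e)\setminus B$. Writing $\lambda_1\geq\lambda_2\geq\cdots$ for the row lengths of $B$, the fact that $L_{\ell,r}(e)$ is the left‑justified greedy filling of size $e$ forces rows $1,\dots,i_0-1$ of $B$ to be complete (length $r$), forces $\lambda_{i_0}=j_0-1<r$, and leaves strictly positive mass below row $i_0$ (in particular $\lambda_{i_0}\geq 1$). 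I write $c_0:=\abs{\{a:\lambda_a=\lambda_{i_0}\}}$ for the multiplicity of the part $\lambda_{i_0}$ in $B$.

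The only use of the hypothesis that $B$ has no out‑block move is to deduce $c_0\geq 2$. If $i_0=1$, then $\lambda_{i_0}$ is the largest part and $\lambda_{i_0}<r$, so $(1,\lambda_{i_0}+1)$ is an in‑corner with $s=\lambda_{i_0}+2$ while $(c_0,\lambda_{i_0})$ is an out‑corner with $s=c_0+\lambda_{i_0}$, and the absence of an out‑block move forces $c_0+\lambda_{i_0}\geq\lambda_{i_0}+2$. If $i_0\geq 2$, then $\lambda_{i_0}$ is the second‑largest distinct part, the first block has exactly $i_0-1$ rows, $(i_0,\lambda_{i_0}+1)$ is an in‑corner, and $(i_0-1+c_0,\lambda_{i_0})$ is an out‑corner, and the same comparison gives $c_0\geq 2$ again.

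Next, for $1\leq j\leq\lambda_{i_0}$ I set $f(j):=(j-1)+\abs{\{a\geq i_0:\lambda_a\geq j\}}$, which is exactly the length that row $i_0$ would acquire if $B$ were transposed at $(i_0,j)$. One has $f(1)=\abs{\{a\geq i_0:\lambda_a\geq1\}}\leq\ell-i_0+1\leq r$ and $f(j-1)-f(j)=\abs{\{a\geq i_0:\lambda_a=j-1\}}-1$. I then let $j^*:=\max\{j\leq\lambda_{i_0}:f(j)\leq r\}$, which exists since $f(1)\leq r$. If $f(\lambda_{i_0})\leq r$ then $j^*=\lambda_{i_0}$ and $f(j^*)=\lambda_{i_0}-1+c_0\geq\lambda_{i_0}+1$ by $c_0\geq 2$; otherwise $j^*<\lambda_{i_0}$, and since $f(j^*+1)>r\geq f(j^*)$ forces $\abs{\{a\geq i_0:\lambda_a=j^*\}}=0$ and hence $f(j^*)=f(j^*+1)-1=r$, we again have $f(j^*)\geq\lambda_{i_0}+1$ because $\lambda_{i_0}<r$. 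Thus in every case $\lambda_{i_0}<f(j^*)\leq r$.

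Finally I claim $P:=(i_0,j^*)\in B$ works. Transposing at $P$ leaves rows $1,\dots,i_0-1$ untouched and changes row $i_0$ to length exactly $f(j^*)>\lambda_{i_0}$, so the $\lesssim$‑first cell of $B\,\Delta\,B^*_P$ is $(i_0,\lambda_{i_0}+1)\in B^*_P\setminus B$, whence $B^*_P<_L B$. For legality one checks that $B^*_P$ is a Young diagram in an $\ell\times r$ frame: the transposed block $T^*$ occupies rows $i_0,\dots,i_0+(\lambda_{i_0}-j^*)$ and columns $j^*,\dots,f(j^*)$, so the column bound is exactly $f(j^*)\leq r$; the row bound $i_0+\lambda_{i_0}-j^*\leq\ell$ follows, when $j^*<\lambda_{i_0}$, from $\abs{\{a\geq i_0:\lambda_a\geq j^*\}}=r-j^*+1\leq\ell-i_0+1$ together with $\lambda_{i_0}\leq r$ (and is trivial when $j^*=\lambda_{i_0}$); and the one delicate point of the Young‑diagram condition, that $\lambda_{i_0+\lambda_{i_0}-j^*}\geq j^*-1$ so that the retained parts of the affected rows abut $T^*$, holds because column $j^*-1$ of $B$ has height at least $(i_0-1)+(r-j^*+1)\geq i_0+\lambda_{i_0}-j^*$. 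The remaining verifications, that the resulting row lengths weakly decrease, are routine, using that the conjugate of a partition is again a partition and that every row of $B$ below row $i_0+\lambda_{i_0}-j^*$ has length at most $j^*-1$. I expect this legality check — pinning down $j^*$ so that the rebuilt row $i_0$ neither overflows the frame nor fails to grow — to be the main obstacle, whereas the combinatorial heart of the lemma, the inequality $c_0\geq 2$, comes out immediately once the right corners are matched up.
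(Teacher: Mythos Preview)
Your argument is correct and takes a genuinely different route from the paper. Both proofs transpose at a cell in row $i_0$, the first non-full row (this is $\rho(Q)$ for the in-corner $Q$ of least row in the paper's notation), but they choose the column differently. The paper picks two candidate columns---first $c(P)$, where $P$ is the out-corner of greatest row, and then, if that transpose is illegal, the ``counted-back'' column $r-v(P,Q)$---and treats the two cases separately. You instead introduce the row-length function $f(j)$ and set $j^*=\max\{j\le\lambda_{i_0}:f(j)\le r\}$, which directly locates a legal column making row $i_0$ strictly longer. Your approach is more systematic and avoids the case split; the paper's is more geometric. Your isolation of the hypothesis ``no out-block moves'' into the single inequality $c_0\ge 2$ is also cleaner than the paper's use of the comparison $s(P)\ge s(Q)$.

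One slip in your ``routine'' paragraph: you write that ``every row of $B$ below row $i_0+\lambda_{i_0}-j^*$ has length at most $j^*-1$,'' and this is false---already when $j^*=\lambda_{i_0}$ and $c_0\ge 2$, row $i_0+1$ of $B$ has length $\lambda_{i_0}=j^*>j^*-1$, and for $j^*<\lambda_{i_0}$ the rows $i_0+\lambda_{i_0}-j^*+1,\dots,i_0+r-j^*$ all have $\lambda_a\ge j^*$. What you actually need, and what is trivially true, is that every row of $B^*_P$ below row $i_0+\lambda_{i_0}-j^*$ has length at most $j^*-1$: those rows receive cells only from the kept part, namely columns $1,\dots,j^*-1$, so their length is $\min(\lambda_a,j^*-1)\le j^*-1$. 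With ``$B$'' replaced by ``$B^*_P$'' the legality verification goes through.
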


\begin{proof}
First we set up some notation.
For $P = (i,j)\in B$, let $\rho(P) = i$ (so $\rho(P)$ is the row of $P$) and let $c(P) = j$ (so $c(P)$ is the column of $P$).
For $P,Q\in B$, define $v(P,Q) = \abs{\rho(P) - \rho(Q)}$, so $v(P,Q)$ is the vertical distance between $P$ and $Q$.
In the Young diagram $B$ let $P$ be the out-corner with $\rho(P)$ greatest among all out-corners and $Q$ be the in-corner of $B$ with $\rho(Q)$ least among all in-corners.
If legal, transpose at $S=(\rho(Q),c(P))$.
If transposing at $S$ is not legal then we ``count back" from the right hand limit of the partition so that the vertical distance between $P$ and $Q$ will fit.
In more detail, transpose at $S' = (\rho(Q), r - v(P,Q))$ whenever transposing at $S$ is not legal.
We claim that this gives a place to transpose legally and that it results in a board that is earlier in lex order than $B$.
Call the result of the performed transpose $B^*$.

First suppose that transposing at $S$ is legal. Note that $S\in B$ as $B$ is a Young diagram and $P\in B$.
If we transpose at $S$ then it is because the transpose at $S$ is legal.
The first row where $B^*$ is different from $B$ is $\rho(Q)$.
In $B$ this row has $c(Q)-1$ blocks.
In $B^*$ this row has $c(P) + v(P,Q) = c(P) + \rho(P) - \rho(Q)$ blocks.
(Since $B$ is not the Young diagram of $L_{\ell,r}(e)$ we know $\rho(P)>\rho(Q)$ and so may remove the absolute value signs in $v(P,Q)$.)
Since $B$ has no out-block moves, we know $s(P)>s(Q)$, i.e, $\rho(P) + c(P) > \rho(Q) + c(Q)$.
Thus $c(Q) < c(P) + \rho(Q) - \rho(Q)$ and so $B^*>_L B$.

Now suppose that transposing at $S$ is not legal.
We claim that that $S' =(\rho(Q), r- v(P,Q))\in B$.
Because transposing at $S$ is not legal, we know $c(Q) + v(P,Q) >r$.  Thus, $r-v(P,Q)< c(Q)$.
Moreover $r-v(P,Q)\geq 1$ as $\ell\leq r$ and $v(P,Q)<\ell$.
Using that $Q$ is an in-corner of $B$, we get that $S'\in B$.
We need to show that the transpose is legal.
Intuitively, the transposed piece fits horizontally because we ``counted back" far enough to make it so.
Algebraically, the length of the row $\rho(Q)$ in $B^*$ is $r-v(P,Q) + v(P,Q) = r$ which is exactly the length that can fit.
We'll show that the transposed piece fits vertically after we show that $B^*_S>_L B$. Note the first place $B$ and $B^*$ differ is in row $\rho(Q)$ and row $\rho(Q)$ in $B^*$ has $r$ blocks while row $\rho(Q)$ in $B$ has strictly fewer than $r$ blocks since $Q$ is an in-corner.
In particular, this means, if the dimensions of the transpose section are $s\times t$, we have $s\geq t$.  Thus the transposed piece will fit vertically.
Therefore, transposing at one of $S$ or $S'$ will result in a legal transpose that moves $B$ earlier in lex order.

\end{proof}

We are now ready to prove Theorem~\ref{bipartite}.  The proof follows easily from Lemmas \ref{differencegraph} and \ref{transposeupinlex}.

\begin{proof}[Proof of Theorem~\ref{bipartite}.]  Suppose $G$ is a bipartite graph that is not equal to $L_{\ell,r}(e)$.  If $G$ is not bipartite threshold then we can apply Lemma~\ref{differencegraph} to get another graph with the same number of vertices and edges, but at most as many matchings.  If $G$ is bipartite threshold, consider the associated Young diagram $B_G$.  If $B_G$ has no out-block moves and $G\neq L_{l,r}(e)$ then by Lemma~\ref{transposeupinlex} there is a legal transpose at $(i,j)$ that moves the associated board earlier in lex order.  This move preserves the number of matchings by Lemma~\ref{transpose}.  Thus, $L_{l,r}(e)$ attains the minimum number of matchings of all sizes.  This concludes the proof of Theorem~\ref{bipartite}.
\end{proof}

\section{Proof of Theorem~\ref{general}}

Recall that our main result states that either the lex or colex graph minimizes the number of matchings (of size $k$) in $\cG_{n,m}$.  In this section we will prove this.  First we will show that there is a graph attaining the minimum that is threshold.  Finally, we will use  the bipartite case to complete the proof.

\begin{lem}\label{compressing}
For all graphs $G$ and all $x,y\in V(G)$
$$m_k(G_{x\to y}) \leq m_k(G).$$
\end{lem}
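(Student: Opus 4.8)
The plan is to mimic the argument of Lemma~\ref{differencegraph}, which handles exactly this statement in the bipartite case, and to check that bipartiteness was never essential. Write $H = G_{x\to y}$ and recall $N_G(x,\ov y) = \{v \notin \{x,y\} : v\sim x, v\not\sim y\}$; passing from $G$ to $H$ deletes the edges $xv$ for $v\in N_G(x,\ov y)$ and inserts the edges $yv$ for $v\in N_G(x,\ov y)$. As in Lemma~\ref{differencegraph}, I would exhibit a size-preserving injection $\phi\colon \M(H)\setminus\M(G)\to\M(G)\setminus\M(H)$; summing sizes then gives $m_k(H)\le m_k(G)$ for every $k$. The injection is induced by the edge replacement
\[
r(e) = \begin{cases} xv & \text{if } e = yv \text{ for some } v\in N_G(x,\ov y),\\ yw & \text{if } e = xw \text{ (so } w\in N_G(x)\cap N_G(y)\text{)},\\ e & \text{otherwise},\end{cases}
\]
applied edgewise, $\phi(M) = \{r(e) : e\in M\}$. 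The one genuinely new case to worry about, which does not arise in the bipartite setting, is the edge $e = xy$ itself: if $xy\in E(G)$ then $xy\in E(H)$ as well (the compression never touches the pair $\{x,y\}$), and $r(xy) = xy$ since $xy$ is neither of the form $yv$ with $v\in N_G(x,\ov y)$ nor $xw$ with $w$ a common neighbor. So $xy$ is simply fixed by $r$, and a matching containing $xy$ contains no other edge at $x$ or $y$, which keeps the conflict analysis below clean.

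With that observation in hand, the verification is exactly the three-case check from Lemma~\ref{differencegraph}. First, $r(e)\in E(G)$ for every $e\in E(H)$: if $v\in N_G(x,\ov y)$ then $xv\in E(G)$; if $xw\in E(H)$ then $w$ is a common neighbor of $x$ and $y$ in $G$ so $yw\in E(G)$; and any other edge of $H$ already lies in $E(G)$. Next, for $M\in\M(H)\setminus\M(G)$ one checks $\phi(M)$ is a matching in $G$: a vertex $x$ can appear in $r(e)$ only if $e\ni y$, a vertex $y$ can appear in $r(e)$ only if $e\ni x$ or $e = yv$ with $v\notin N_G(x,\ov y)$, and since $M\in\M(H)\setminus\M(G)$ forces some edge $yv$ with $v\in N_G(x,\ov y)$ to lie in $M$, one rules out double occupancy at $x$ and at $y$ exactly as before; occupancy at any third vertex is immediate because $r$ does not change which third vertices an edge touches. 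That same edge $yv$ with $v\in N_G(x,\ov y)$ gets sent to $xv\notin E(G)$, so $\phi(M)\notin\M(G)$, i.e. $\phi(M)\in\M(G)\setminus\M(H)$. Injectivity follows by exhibiting the symmetric left inverse $r'$ (swap the roles of $x,y$ in the definition of $r$, using $N_G(x,\ov y)$) and checking $r'(r(e)) = e$, hence $\phi'\circ\phi = \mathrm{id}$.

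I do not expect a real obstacle here; the content is entirely contained in Lemma~\ref{differencegraph}, and the only thing to be careful about is that $x$ and $y$ may themselves be adjacent in $G$ (the bipartite lemma had them on the same side, hence nonadjacent). As noted, that case is harmless because $r$ fixes the edge $xy$ and a matching edge $xy$ isolates $x$ and $y$ from the rest of the matching. If one prefers to avoid repeating the verification, the cleanest write-up is: "The proof is identical to that of Lemma~\ref{differencegraph}; the only additional point is that if $xy\in E(G)$ then $r$ fixes this edge, and the conflict analysis at $x$ and $y$ goes through unchanged." I would include at least the displayed definition of $r$ and the one-sentence remark about the edge $xy$ so the reader can see nothing breaks.
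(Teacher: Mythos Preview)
Your approach is essentially identical to the paper's: the paper defines the same replacement map (written as $r(e)=e\Delta\{x,y\}$ for $e\in E(y,N_G(x,\ov y))$, $r(xz)=yz$ for $z\neq y$, and $r(e)=e$ otherwise) and carries out the same three-case conflict check and left-inverse argument. The only thing to fix is a pair of typos in your penultimate paragraph: the edge $xv$ lies in $E(G)$ but not in $E(H)$, so you should write ``$xv\notin E(H)$, so $\phi(M)\notin\M(H)$'' rather than $E(G)$ and $\M(G)$; with that correction your conclusion $\phi(M)\in\M(G)\setminus\M(H)$ is exactly right.
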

\begin{proof}
Let $H:=G_{x\to y}$.  As in the proof of Lemma~\ref{differencegraph} we will construct an injection $\phi$ from $\M_k(H)\setminus \M_k(G)$ to $\M_k(G)\setminus \M_k(H)$ from which it follows that $m(H)\leq m(G)$.  Let $A = E(x,N_G(x,\ov{y}))\subset E(G)$ and $B=E(y,N_G(x,\ov{y}))\subset E(H)$.
Then $H=G-A+B$. So
$$\M_k(H)\setminus \M_k(G)=\{M\in \M_k(H): M\cap B\neq \emptyset\}$$ and similarly
$$\M_k(G)\setminus \M_k(H)=\{M\in \M_k(G): M\cap A\neq \emptyset\}$$
 Define a replacement function $r:E(H)\to E(G)$.  Let
$$r(e):=\begin{cases}
e \Delta\{x,y\} &\text{ if } e\in B \\
yz  &\text{ if } e=xz, z\neq y\\
e  &\text{ otherwise }
\end{cases}.$$

For each $e\in E(H)$ note that $r(e)\in E(G)$.
If $e\in B$ then $r(e) = e\Delta \{x,y\}\in A\subset E(G)$.
If $e=xz\in H$ and $z\neq y$ then $z\in N_G(x)\cap N_G(y)$ and so $\phi(e) = yz\in E(G)$.  Finally, if $e\notin B$ then $e\in E(G)\cap E(H)$.

Now define $\phi:\M_k(H)\setminus \M_k(G)\to \M_k(G)\setminus \M_k(H)$   by
$$\phi(M):= \{r(e): e\in M\}.$$

Suppose $M\in \M_k(H)\setminus \M_k(G)$.
We claim $\phi(M)\in \M_k(G)\setminus \M_k(H)$.
Since $r:E(H)\to E(G)$ we know $\phi(M)\subset E(G)$.
To show that $\phi(M)\in \M_k(G)$ we suppose to a contradiction that $r(e)$ is incident to $r(f)$ for some $e,f\in M$.
For the first case, suppose that $r(e) \cap r(f) = x$.
Note that $x \in r(e)$ for any edge $e$ if and only if $e \in B$ or $e = xy$.  Since $M \cap B \neq \emptyset$ we know that $xy\notin M$.
Thus, both $e$ and $f$ are in $B$ and $e\cap f = y$, a contradiction since $e$ and $f$ are in the matching $M$.

Next suppose that $r(e) \cap r(f) = y$.
In $H$ we know neither $e$ nor $f$ are incident to $y$ since $M\cap B \neq \emptyset$ and if $e$ or $f$ are in $B$ then we replaced them with edges incident to $x$.
Thus, it must be the case that both $e$ and $f$ are incident to $x$ in $M$, a contradiction since $M$ is a matching.
Finally, if $r(e) \cap r(f) = z$ for $z\neq x$ and $z\neq y$ then $r$ acted as the identity on $e$ and $f$. So $e\cap f = z$, a contradiction.
Thus, $\phi(M) \in \M_k(G)$.
Finally, $\phi(M)\notin \M_k(H)$ since $\phi(M)$ has an edge from $A$ and $A\cap E(H) = \emptyset$.

To complete the proof we show $\phi$ is an injection.  Define $r':E(G)\to E(H)$ by
$$r'(e):=
\begin{cases}
e\Delta\{x,y\} &\text{ if } e\in A\\
xz &\text{ if } e=yz, z\neq x\\
e &\text{otherwise}
\end{cases}.$$
Define $\phi': \im(\phi)\to \M_k(H)$ by $\phi'(M) = \{r'(e): e\in M\}$. Given $M\in\M_k(H)\setminus\M_k(G)$ it is easy to check that $\phi'(\phi(M)) = M$.  Therefore $\phi$ has a left inverse and so $\phi$ is injective.

\end{proof}

Thus we may assume that the graph that minimizes $m_k$ is threshold.  In fact, we can find a graph that minimizes $m_k$ that has even more structure.

\begin{defi}
For a threshold graph $G$ write $V(G) = K\cup I$ where $G[K]$ is a clique and $G[I]$ is an independent set.  Let $B$ be the bipartite graph with partite sets $K$ and $I$ and edge set $E(K,I)$.  If $E(K,I)$ is $\cL_{|K|,|I|}(e)$ for some $e$, we will say that $G$ is \emph{lex-across}.
\end{defi}

We will now prove a lemma that states that if some parameter is either maximized or minimized by a lex-across graph, then that parameter is maximized or minimized by the lex or colex graph.

\begin{lem}\label{lem:lexacross}
If a parameter $P$ is such that for all $n,e$ there is a $P$-optimal graph in $\cG_{n,e}$ that is lex-across, then  either the lex graph $\cL(n,e)$ or the colex graph $\cC(n,e)$ is $P$-optimal.
\end{lem}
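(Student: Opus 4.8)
The plan is to analyze what a lex-across graph with $n$ vertices and $e$ edges can look like, and show that the only possibilities that are candidates for $P$-optimal are, in fact, exactly the lex graph and the colex graph. Recall that a lex-across graph $G$ is determined by a partition $V(G) = K \cup I$ into a clique of size $s = |K|$ and an independent set of size $n-s$, together with the requirement that the bipartite graph $E(K,I)$ is a lex bipartite graph $\cL_{s,n-s}(f)$ where $f = e - \binom{s}{2}$ is the number of cross-edges. So a lex-across graph is specified by a single parameter, namely the clique size $s$ (with $\binom{s}{2} \le e$ and $f = e - \binom{s}{2} \le s(n-s)$). Denote this graph by $G_s$. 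First I would observe that $G_s$ is itself a threshold graph, and more importantly that $G_s$ has a very concrete structure: after the $s$ clique vertices, the lex bipartite graph $\cL_{s,n-s}(f)$ puts (writing $f = q(n-s) + c$ with $0 \le c < n-s$) $q$ clique vertices adjacent to all of $I$, one clique vertex adjacent to $c$ vertices of $I$, and the rest adjacent to none of $I$.

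The key step is then to identify, among the family $\{G_s\}$, which members coincide with $\cL(n,e)$ and $\cC(n,e)$, and to show that every other $G_s$ is dominated (for the purpose of $P$-optimality, in whichever direction $P$ is optimized) by one of these two. I expect that the cleanest route is to show that the family of lex-across graphs with a fixed edge count is \emph{linearly ordered} in a suitable sense — for instance, that each $G_s$ can be obtained from $G_{s+1}$ (or vice versa) by a sequence of compressions or the reverse, so that $m_k$ is monotone along the chain. Concretely, one shows $\cL(n,e) = G_{s_{\min}}$ where $s_{\min}$ is the least admissible clique size (the lex graph has a clique of dominant vertices and is as ``spread out'' as possible), and $\cC(n,e) = G_{s_{\max}}$ where $s_{\max}$ is the largest admissible clique size (colex packs edges into a clique). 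Then since $P$-optimality is attained at \emph{some} $G_s$, and the chain $G_{s_{\min}}, \dots, G_{s_{\max}}$ has monotone parameter values (a consequence of Lemma~\ref{compressing}, since adjacent $G_s$ differ by compressions in one direction), the optimum is attained at one of the two endpoints, which are precisely $\cL(n,e)$ and $\cC(n,e)$.

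The main obstacle I anticipate is verifying the monotonicity — or more precisely, checking that passing from $G_s$ to $G_{s\pm 1}$ really is realized by compression moves (each of which only decreases $m_k$, by Lemma~\ref{compressing}), or else by the reverse operation. The subtlety is that shrinking the clique by one vertex and redistributing that vertex's role into the bipartite part is not literally a single compression; it may require re-lex-ifying the resulting bipartite graph, which is where one invokes Theorem~\ref{bipartite} to say the lex bipartite graph is the bipartite minimizer. One has to be careful that this re-lex-ification is consistent with the direction of monotonicity one is claiming. An alternative that sidesteps the chain argument: directly compare $m_k(G_s)$ for each admissible $s$ against $\min\{m_k(\cL(n,e)), m_k(\cC(n,e))\}$ by showing that any intermediate $G_s$ admits a nontrivial compression toward one of the extremes, so it cannot be the unique optimum; combined with the fact that some $G_s$ \emph{is} optimal, the optimum must be an extreme $G_s$, i.e., $\cL(n,e)$ or $\cC(n,e)$.

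Finally I would record the two boundary identifications as small lemmas or inline computations: that $\cL(n,e)$ is lex-across with clique equal to the set of dominant vertices (plus possibly one more vertex), which follows from the description of the lex graph given just after Lemma~\ref{thresholdnbhd}; and that $\cC(n,e)$ is lex-across with $I$ containing all but at most one vertex with neighbors, again from the remark following Lemma~\ref{thresholdnbhd}. With those two facts in hand, together with the monotonicity of $m_k$ along the family $\{G_s\}$, the statement follows: the $P$-optimal lex-across graph guaranteed by hypothesis must be one of the endpoints $G_{s_{\min}} = \cL(n,e)$ or $G_{s_{\max}} = \cC(n,e)$, hence $\cL(n,e)$ or $\cC(n,e)$ is $P$-optimal.
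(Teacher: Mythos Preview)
Your plan leans on Lemma~\ref{compressing} and Theorem~\ref{bipartite} to get monotonicity of the parameter along the chain $G_{s_{\min}},\dots,G_{s_{\max}}$, but Lemma~\ref{lem:lexacross} is stated for an \emph{arbitrary} parameter $P$: the only hypothesis is that some $P$-optimal graph is lex-across. Nothing in that hypothesis tells you how $P$ behaves under compression, so invoking Lemma~\ref{compressing} is illegitimate here. Both your main route and your alternative (``any intermediate $G_s$ admits a nontrivial compression toward one of the extremes'') have this defect; at best they would re-prove the special case $P\in\{m,m_k\}$, not the lemma as stated. A second, more concrete problem: your endpoint identification $G_{s_{\max}}=\cC(n,e)$ is false in general. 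For $n=10$, $e=5$ one has $s_{\max}=3$, and $G_3$ is $K_3$ together with two edges from a single clique vertex into $I$ --- neither $\cL(10,5)$ nor $\cC(10,5)$.

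The paper's proof is both shorter and completely parameter-agnostic; it never compares values of $P$ at two different graphs. Among all triples $(G,K,I)$ with $G$ a $P$-optimal lex-across graph it chooses one with $|K|$ extremal and then argues structurally about the \emph{same graph} with a reclassified vertex. If $|K|=\ov{s}\ge n/2$ and some $v\in I$ had $N(v)=K$, then $(G,K\cup\{v\},I\setminus\{v\})$ would again be a $P$-optimal lex-across triple with larger clique part, contradicting maximality of $\ov{s}$; hence no $v\in I$ is complete to $K$, and the lex bipartite structure forces all but one vertex of $I$ to be isolated, so $G=\cC(n,e)$. When $\ov{s}<n/2$ the symmetric argument with $\und{s}$ yields $G=\cL(n,e)$. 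No chain, no monotonicity, and nothing about $P$ beyond the hypothesis is used.
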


\begin{proof}
Given $n,e$ let $\cO$ be the collection of all triples $(G,K,I)$ such that $V(G) = K\cup I$, $G[K]$ is complete, $G[I]$ is independent, and $G$ is a $P$-optimal lex-across graph. Define $\ov{s}$ to be the maximum size of $K$ among all triples in $\cO$ and let $\und{s}$ be the minimum size of $K$ among all triples in $\cO$.  We consider two cases: when $\ov{s}\geq \frac{n}{2}$ and when $\ov{s}< \frac{n}{2}$.

Suppose first that $\ov{s}\geq \frac{n}{2}$.  Let $(G,K,I)\in \cO$ such that $\abs{K} = \ov{s}$.  In this case $K$ has at least as many vertices as $I$. If $v\in I$ has $N(v) = K$ then $(G, K\cup\{v\}, I\setminus\{v\}) \in \cO$ and $\abs{K\cup\{v\}}\geq \ov{s}$, a contradiction. Since $(G,K,I)\in \cO$ we know $G$ is lex-across, and so it must be the case that some vertex in $I$ has fewer than $\ov{s}$ neighbors and all other vertices in $I$ are isolated.  Thus $G$ is the colex graph $\cC(n,e)$.

Suppose now that $\ov{s}< \frac{n}{2}$.  In this case, $\und{s}<\frac{n}{2}$ as well.  Let $(G,K,I) \in \cO$ such that $\abs{K} = \und{s}$.  Here $\abs{K}<\abs{I}$.  Suppose that there is a vertex $k\in K$ such that $N(k) \cap I = \emptyset$.  Then $(G,K\setminus\{k\}, I\cup \{k\})\in \cO$ and $\abs{K\setminus\{k\}}<\und{s}$, a contradiction. So every vertex in $K$ is adjacent to some vertex in $I$.  Since $G$ is lex across all vertices in $K$ except for one possible exception have closed neighborhood all of $G$.  Thus $G$ is the lex graph $\cL(n,e)$.
\end{proof}

We are now ready to prove our main theorem, Theorem~\ref{general}.

\begin{proof}[Proof of Theorem~\ref{general}.]
Let $G\in \cG_{n,e}$ be a graph that minimizes $m$ (respectively $m_k$). By Corollary~\ref{cor:allcompressed2} and Lemma~\ref{compressing} we can assume that $G$ is threshold.
By Theorem~\ref{bipartite} the lex bipartite graph minimizes $m_k$ for all $k\geq 0$ in $\cB_{\ell,r,e}$, so by Lemma~\ref{minimize} we can assume $G$ is lex-across.
By Lemma~\ref{lem:lexacross} the lex or colex graph minimizes $m$ (respectively $m_k$) in $\cG_{n,e}$ which completes the proof of Theorem~\ref{general}.
\end{proof}

\section{Further Directions}

There are many open problems remaining in this area. For instance the Upper Matching Conjecture of Friedland, Krop, and Markstr\"om \cite{FKM} claims that for all $d$-regular graphs $G$ on $2n$-vertices such that $d$ divides $n$ we have 
\[
  m_k(G) \le m_k\bigl(\frac{n}d K_{d,d}\bigr)
\]
for all $k$.

We know that the lex or colex graph doesn't necessarily minimize $m_k(G)$ for all $k$ simultaneously.  For example, consider the family $\cG_{18,87}$.  Then
$$\begin{array}{c | c | c}
 &m_2  &m_7\\
 \hline
 \cL(18,87) &2745 &0\\
 \hline
 \cC(18,87) &2739 &93,555
\end{array}$$
While $m_2(\cC(18,87)) < m_2(\cL(18,87))$, the lex graph has no $7$-matchings and the colex graph has many. This indicates that it is a non-trivial problem to determine the graph $G\in \cG_{n,e}$ that minimizes the matching polynomial
\[
	m_G(\lambda) = \sum_{k\ge 0} m_k(G) \lambda^k
\]
for a given value of $\lambda>0$. Theorem~\ref{general} includes case $\lambda=1$. By Lemma~\ref{compressing} the extremal graph can be taken to be threshold.

\bibliographystyle{amsplain}
\bibliography{Matchings}

\end{document}